
\NeedsTeXFormat{LaTeX2e}

\newcommand{\R}{\mathds R}

\newcommand{\N}{\mathds N}

\documentclass[twoside,a4paper,10pt]{amsart}

\usepackage{times}
\usepackage{dsfont}

\renewcommand{\contentsline}[3]{\csname new#1\endcsname{#2}{#3}}
\newcommand{\newchapter}[2]{\bigskip\hbox to \hsize{\vbox{\advance\hsize by -.5cm\baselineskip=12pt\parfillskip=0pt\leftskip=2cm\noindent\hskip -2cm #1\leaders\hbox{.}\hfil\hfil\par}$\,$#2\hfil}}
\newcommand{\newsection}[2]{\medskip\hbox to \hsize{\vbox{\advance\hsize by -.5cm\baselineskip=12pt\parfillskip=0pt\leftskip=2.5cm\noindent\hskip -2cm #1\leaders\hbox{.}\hfil\hfil\par}$\,$#2\hfil}}
\newcommand{\newsubsection}[2]{\medskip\hbox to \hsize{\vbox{\advance\hsize by -.5cm\baselineskip=12pt\parfillskip=0pt\leftskip=3.5cm\noindent\hskip -2cm #1\leaders\hbox{.}\hfil\hfil\par}$\,$#2\hfil}}

\numberwithin{equation}{section}


\title[Semi-Riemannian Bumpy Metric Theorem]{On the semi-Riemannian bumpy metric theorem}
\author[L. Biliotti]{Leonardo Biliotti}
\address{Dipartimento di Matematica\hfill\break\indent Universit\`a
di Parma\hfill\break\indent Viale G. Usberti 53/A\hfill\break\indent 43100 Parma, Italy}
\email{leonardo.biliotti@unipr.it}

\author[M. A. Javaloyes]{Miguel Angel Javaloyes}
\address{Departamento de Geometr\'{\i}a y Topolog\'{\i}a.\hfill\break\indent
 Facultad de Ciencias, Universidad de Granada.\hfill\break\indent
 Campus Fuentenueva s/n, 18071 Granada, Spain}
 \email{ma.javaloyes@gmail.es}
\thanks{The second author was partially supported by
Regional J. Andaluc\'{\i}a Grant P06-FQM-01951 and by Spanish MEC Grant MTM2007-64504.
The third author is sponsored by Capes, Brasil, Grant BEX 1509-08-0 and Fundaci\'{o}n
S\'{e}neca grant 09708/IV2/08, Spain. This manuscript was finalized
while the third author was visiting the Mathematics Department of Nara's Women University, Japan.
Special thanks are due to Prof.\ Miyuki Koiso for the invitation, and for providing an excellent
working environment in Nara.}

\author[P.\ Piccione]{Paolo Piccione}
\address{Departamento de Matem\'atica,\hfill\break\indent
Universidade de S\~ao Paulo, \hfill\break\indent Rua do Mat\~ao
1010,\hfill\break\indent CEP 05508-900, S\~ao Paulo, SP, Brazil}
\curraddr{Department of Mathematics, \hfill\break\indent
University of Murcia, Campus de Espinardo\hfill\break\indent
30100 Espinardo, Murcia, \hfill\break\indent Spain}
\email{piccione.p@gmail.com}

\subjclass[2000]{57R45, 57R70, 57N75, 58E10}


\date{July 21st, 2009}

\begin{document}


\theoremstyle{plain}\newtheorem*{teon}{Theorem}
\theoremstyle{definition}\newtheorem*{defin*}{Definition}
\theoremstyle{plain}\newtheorem{teo}{Theorem}[section]
\theoremstyle{plain}\newtheorem{prop}[teo]{Proposition}
\theoremstyle{plain}\newtheorem{lem}[teo]{Lemma}
\theoremstyle{plain}\newtheorem*{lem-n}{Lemma}
\theoremstyle{plain}\newtheorem{cor}[teo]{Corollary}
\theoremstyle{definition}\newtheorem{defin}[teo]{Definition}
\theoremstyle{remark}\newtheorem{rem}[teo]{Remark}
\theoremstyle{plain} \newtheorem{assum}[teo]{Assumption}
\swapnumbers
\theoremstyle{definition}\newtheorem{example}{Example}


\begin{abstract}
We prove the semi-Riemannian bumpy metric theorem using equivariant variational
genericity. The theorem states that, on a given compact manifold $M$, the set
of semi-Riemannian metrics that admit only nondegenerate closed
geodesics is generic relatively to the $C^k$-topology, $k=2,\ldots,\infty$,
in the set of metrics of a given index on $M$. A higher order genericity
Riemannian result of Klingenberg and Takens \cite{KliTak} is extended to
semi-Riemannian geometry.
\end{abstract}

\maketitle

\begin{section}{Introduction}
One of the central results in the theory of generic properties of Riemannian geodesic flows
is the so called \emph{bumpy metric theorem}. Recall that a Riemannian metric on a smooth
manifold $M$ is said to be bumpy if all its closed geodesics are nondegenerate. A closed
geodesic $\gamma$ is nondegenerate when the only periodic Jacobi fields along $\gamma$ are
constant multiples of the tangent field. The bumpy metric theorem states that bumpy metrics
over a compact manifold $M$ are \emph{generic} relatively to the $C^k$-topology, $k=2,\ldots,\infty$.
The bumpy metric theorem is attributed to Abraham, who was the first to formulate its statement
in \cite{Abr}, although the first complete proof of the result is due to Anosov, see \cite{Ano}.
The bumpy metric theorem is the keystone for other important genericity results of geodesic flows, see
for instance \cite{ContPat, KliTak}. Namely, nondegenerate geodesics are isolated, and
once one knows that the set of metrics that admit only isolated geodesics is generic, further
genericity results are obtained by showing the existence of suitable \emph{local} perturbations of
a given metric around a given closed geodesic, see Section~\ref{sec:further} of the present paper
for an example.
An important observation is that an analogous genericity result for periodic orbits
does \emph{not} hold in the more general class of Hamiltonian systems, see \cite{MeyPal}.
As pointed out by Anosov, the lack of genericity of Hamiltonian functions with only nondegerate periodic
orbits is caused by the fact that the behavior of Hamiltonian flows, unlike Riemannian geodesic flows,
depends essentially on the energy level.

A significative extension of the genericity of periodic trajectories has been proved in \cite{GonMir}
in the context of magnetic flows on a surface; the result is then used by the author to establish
an extension of the Kupka-Smale theorem.
It is a natural (and useful) question to ask whether the bumpy metric theorem can be extended
to the semi-Riemannian realm, i.e., to the case of geodesic flows of non positive definite metrics
on a given compact manifold $M$.
The main result of this paper (Theorem~\ref{thm:finalbumpy})
gives an affirmative answer to this question.
Note that, unlike the Riemannian case, when the metric is not positive definite there is
a significative qualitative change of the structure of the geodesic flow when passing from negative
to positive values of the energy.

There are two ways of characterizing nondegeneracy of closed geodesics, corresponding to the \emph{dynamical}
and the \emph{variational} approach\footnote{%
The dynamical approach consists in studying closed geodesics as those geodesics that are
fixed points for the Poincar\'e map. The variational approach consists in studying geodesics as critical
points of the geodesic action functional defined in the free loop space.}
to the closed geodesic problem, that can be described as follows. Denote by $T^1M$ the unit tangent bundle of $M$ relatively to a Riemannian
metric on $M$; for all $v\in T^1M$, let $\gamma_v:\left[0,+\infty\right[\to M$ be the
unique geodesic with $\dot\gamma(0)=v$.

From the dynamical viewpoint, nondegeneracy of a closed geodesic
$\gamma_0:\left[0,+\infty\right[\to M$ of period $T$
means that the map $\R^+\times T^1M\ni(t,v)\mapsto \big(v,\dot\gamma_v(t)\big)\in T^1M\times T^1M$
is transverse to the diagonal of $T^1M\times T^1M$ at the point $(T,v_0)$, where $v_0=\dot\gamma_0(0)$.
Anosov's proof of the Riemannian bumpy metric theorem uses this approach, and it employs the transversality theorem.

The dynamical approach does not work well when considering semi-Riemannian metrics, starting from
the observation that even the notion of unit tangent bundle itself is not very meaningful in semi-Riemannian
geometry. Distinguishing \emph{causal} notions of ``unit tangent bundles'', i.e., timelike, lightlike
and spacelike, is also not very meaningful when dealing with families of metrics.
For our proof of the semi-Riemannian bumpy metric theorem we will use the variational approach;
from this viewpoint, nondegeneracy for a closed geodesic $\gamma$ means that $\gamma$ is a nondegenerate
critical point of the geodesic action functional, in the sense that the kernel of the second
variation of the functional, the so-called \emph{index form} of the geodesic, has one dimensional
kernel consisting of the constant multiples of the tangent field $\dot\gamma$.
Using an idea of White \cite{Whi} (see also \cite{BilJavPic}), semi-Riemannian metrics with degenerate
geodesics are characterized as critical values of a certain nonlinear Fredholm
map between infinite dimensional Banach spaces, and a genericity result in this situation can be obtained
as an application of Sard--Smale theorem.
However, compared to the fixed endpoint case studied in \cite{BilJavPic, GiaGiaPicCMP},
the periodic case has two important differences. First, the geodesic action functional is invariant
by the action of $\mathds S^1$ obtained by rotation of the parameter space (in fact, the entire orthogonal
group $\mathrm O(2)$ acts on the free loop space, but here it will suffice to consider the restriction
of this action to its identity connected component).
Second, in the periodic case an essential transversality condition employed in the application of
Sard--Smale theorem only holds for \emph{prime} closed geodesics, i.e., geodesics that are not obtained
as $n$-fold iteration of some other geodesic, with $n>1$.

In order to deal with the question of rotations, we will prove an abstract equivariant genericity result
(Theorem~\ref{thm:abstrgenericitysmooth})
in the case of actions of compact Lie groups all of whose orbits have the same dimension. Note that the
$\mathds S^1$-action on $\Lambda$ has only finite cyclic groups as stabilizers, hence
all its orbits are homeomorphic to $\mathds S^1$ itself. This genericity result uses the notion of
\emph{good submanifolds} of manifolds endowed with group actions. Roughly speaking, every second countable
Hilbert manifold endowed with the smooth action of a compact Lie group $G$ with equidimensional
orbits contains a countable family $(S_n)_{n\in\N}$, of submanifolds that are everywhere transverse to the orbits,
with codimension equal to the dimension of the orbits, and such that every point of the manifold
belongs to the orbit of some point of some $S_n$. In this situation, given a $G$-invariant function
$f$, then all its critical orbits are nondegenerate in the equivariant sense if and only if the
restriction $f\vert_{S_n}$ is a Morse function for all $n$ (Proposition~\ref{thm:MorseGMorse}).

A somewhat annoying point is that, with the above formulation, the abstract equivariant genericity result
cannot be applied directly to the $\mathds S^1$-action on the free loop space, in that such action
is not smooth, but only continuous. Due to this reason, the existence of good submanifolds for the
closed geodesic problem has to be shown with direct arguments (see Subsection~\ref{sub:goodsubmanifolds}),
and this is one of the most original contributions of the present paper.
Using the existence of good submanifolds, the abstract equivariant genericity problem is then
applied to the open subset $\widetilde\Lambda$ of $\Lambda$ consisting of prime curves, obtaining
a weak version of the bumpy metric theorem (Proposition~\ref{thm:weakbumpy}).
In Appendix~\ref{sec:appA} we will use the existence of good submanifolds for the free loop
space to prove a result of smooth dependence of closed geodesics, needed for our theory,
and that seems to have an interest in its own.

Genericity of nondegeneracy of iterates does not follow from the equivariant variational setup
due to a subtle technical problem, as discussed in details in \cite{BilJavPic}. Namely, the transversality
assumption employed in the abstract genericity result is not satisfied at a class of \emph{very degenerate}
iterates of closed geodesics.
In order to deal with iterates, we will follow Anosov's ingenious idea in \cite{Ano}, with
suitable modifications that make it work also in the semi-Riemannian case (Subsection~\ref{sub:iterate}).
We introduce families of metrics $\mathcal M(a,b)$ parameterized by two positive real numbers $a,b$,
that correspond to the \emph{period} and to the \emph{minimal period} of closed geodesics.
For our semi-Riemannian extension, the notion of period (which is meaningless in the case
of lightlike geodesics) is replaced by the notions of \emph{energy} relatively to an
auxiliary Riemannian metric. The set of bumpy metrics correspond to the countable intersection
$\bigcap_{n\ge1}\mathcal M(n,n)$, and a proof of the bumpy metric theorem is obtained
by showing that each $\mathcal M(a,b)$ is open and dense in the set of metrics.
As in Anosov's paper, the crucial step of the proof (Lemma~\ref{thm:lemit5}) uses a local
metric perturbation argument due to Klingenberg. This argument works in the semi-Riemannian
case only for non lightlike geodesics; for the lightlike case a further perturbation
argument is needed (Proposition~\ref{thm:prop2} and Corollary~\ref{thm:cor3}).

By the above arguments, genericity of the set of bumpy metrics is proved in the $C^k$-topology,
with $k\ge2$. Genericity in the $C^\infty$-topology, which does not come from a Banach space
setup, is obtained by a standard argument. For the reader's convenience, this is discussed
in Appendix~\ref{sec:appB}.

Finally, in Section~\ref{sec:further} we will show how to extend to semi-Riemannian metrics
another important genericity result, which in the Riemannian case is due to Klingenberg
and Takens \cite{KliTak}. Such result states that one has $C^{k+1}$-genericity of those metrics for
which the Poincar\'e map of every closed geodesic has $k$-th jet that belongs to a prescribed
open dense invariant subset of the space of $k$-jets of symplectic maps.
This generalizes the bumpy metric theorem, in that the bumpy condition is an open and
 dense condition on the first jet of the Poincar\'e map (eigenvalues different from $1$ of the linearized
 Poincar\'e map). In the semi-Riemannian context, the statement of this theorem makes sense only
 for metrics that do not possess closed lightlike geodesics. Once the bumpy metric theorem
 has been established, Klingenberg-Takens genericity result is obtained by the same perturbation
 argument employed in the Riemannian case, together with a result of genericity of semi-Riemannian metrics
 without closed lightlike geodesics (Proposition~\ref{thm:Laopen}).

\end{section}

\begin{section}{Genericity of equivariant nondegeneracy}
\label{sec:equivnondegeneracy}
Let us recall the following result from \cite{BilJavPic, Chi80}:
\begin{teo}\label{thm:abstrgenericity}
Let $F:A\to\R$ be a function of class $C^2$ defined on an open subset of the product $X\times Y$, where
$X$ is a separable Banach manifold and $Y$ is a separable Hilbert manifold. Assume that for every $(x_0,y_0)\in A$
such that $\frac{\partial f}{\partial y}(x_0,y_0)=0$ the following conditions hold:
\begin{itemize}
\item[(a)] the second derivative $\frac{\partial^2f}{\partial y^2}(x_0,y_0)$ is a (self-adjoint) Fredholm operator
on $T_{y_0}Y$;
\item[(b)] for all $v\in\mathrm{Ker}\big[\frac{\partial^2f}{\partial y^2}(x_0,y_0)\big]\setminus\{0\}$ there
exists $w\in T_{x_0}X$ such that \[\frac{\partial^2f}{\partial x\partial y}(x_0,y_0)\big(v,w\big)\ne0.\]
\end{itemize}
Denote by $\Pi:X\times Y\to X$ the projection, and, for $x\in\Pi(A)$, set \[A_x=\big\{y\in Y:(x,y)\in A\big\}.\]
Then, the set of $x\in X$ such that the functional $A_x\ni y\mapsto f(x,y)\in\R$ is a Morse function is generic
in $\Pi(A)$.
\end{teo}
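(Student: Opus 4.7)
The plan is to apply the Sard--Smale theorem to the projection onto $X$ of a Banach submanifold of fibered critical points. The three main steps are: realize the critical set as a Banach submanifold via an implicit function theorem, verify that the projection to $X$ is a Fredholm map of index zero, and identify its regular values with those $x\in X$ for which $f(x,\cdot)$ is Morse.

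First, working in local trivializations I would introduce the map $\Phi:A\to H^*$ defined by $\Phi(x,y)=\frac{\partial f}{\partial y}(x,y)$, where $H$ denotes the model Hilbert space of $Y$. The zero set $\mathcal C=\Phi^{-1}(0)$ is precisely the set of fibered critical points. At $(x_0,y_0)\in\mathcal C$ the derivative is
\[
D\Phi(x_0,y_0)(w,v)=\frac{\partial^2 f}{\partial x\,\partial y}(x_0,y_0)(w,\cdot)+T(v),\qquad T:=\frac{\partial^2 f}{\partial y^2}(x_0,y_0).
\]
By condition (a), $T$ is self-adjoint Fredholm, so $H=K\oplus K^\perp$ with $K=\Ker(T)$ finite dimensional and $T|_{K^\perp}:K^\perp\to\mathrm{Im}(T)=K^\perp$ an isomorphism (upon identifying $H^*\cong H$). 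Condition (b) says exactly that the linear map
\[
L:T_{x_0}X\ni w\longmapsto \frac{\partial^2 f}{\partial x\,\partial y}(x_0,y_0)(\cdot,w)\big|_K\in K^*
\]
has no nonzero element of $K$ in the annihilator of its image; by finite-dimensionality of $K$ this is equivalent to $L$ being surjective. Combined with the surjectivity of $T$ onto $K^\perp$, this yields surjectivity of $D\Phi(x_0,y_0)$ onto $H^*$. Moreover $\Ker\bigl(D\Phi(x_0,y_0)\bigr)$ is isomorphic to $\Ker(L)\oplus K$, with $\Ker(L)$ closed of codimension $\dim K$ in $T_{x_0}X$; since finite-codimensional closed subspaces split in any Banach space, $\Ker(D\Phi)$ admits a topological complement. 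The implicit function theorem then exhibits $\mathcal C$ as a $C^1$-Banach submanifold of $A$.

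Next I would analyze the smooth projection $\pi:=\Pi|_{\mathcal C}:\mathcal C\to X$. Its differential at $(x_0,y_0)$ is the restriction of $(w,v)\mapsto w$ to $T_{(x_0,y_0)}\mathcal C$. From the description above, $\Ker(d\pi)\cong K$ (take $w=0$ and $v\in K$ arbitrary), whereas the image of $d\pi$ is $\Ker(L)$, of codimension $\dim K$. Hence $d\pi$ is Fredholm of index zero. Since $X$ is separable and $\mathcal C$ inherits separability from $A$, the Sard--Smale theorem produces a generic subset of $\Pi(A)\subseteq X$ consisting of regular values of $\pi$.

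To finish, I would note that $x\in\Pi(A)$ is a regular value of $\pi$ precisely when $\Ker\bigl(\frac{\partial^2 f}{\partial y^2}(x,y)\bigr)=\{0\}$ at every $(x,y)\in\mathcal C$, i.e., every critical point $y$ of $A_x\ni y\mapsto f(x,y)$ is nondegenerate, which is exactly the Morse condition for $f(x,\cdot)$. The point that I expect to require the most care is the verification of surjectivity of $D\Phi$ together with the splitting of its kernel in the mixed Banach/Hilbert setting; once these are in place, the implicit function theorem and Sard--Smale deliver the conclusion.
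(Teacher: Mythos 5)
Your proposal is correct, and it follows precisely the route the paper indicates: the paper cites the result (Corollary~3.4 of \cite{BilJavPic}) rather than proving it, but the paragraph immediately after Theorem~\ref{thm:abstrgenericity} sketches exactly your argument — the transversality condition makes $\mathfrak M=\{\tfrac{\partial f}{\partial y}=0\}$ a Banach submanifold, the projection $\Pi\vert_{\mathfrak M}:\mathfrak M\to X$ is a nonlinear Fredholm map of index $0$ whose regular values are the $x$ for which $f(x,\cdot)$ is Morse, and Sard--Smale finishes. You have supplied the linear-algebra verifications (surjectivity of $D\Phi$, the kernel description, and index $0$) that the paper leaves to its reference; the only small wrinkle is the splitting of $\mathrm{Ker}(D\Phi)$, which is not finite-codimensional as your phrasing might suggest, but does split because one can build a bounded right inverse of $D\Phi$ from a right inverse of $L$ together with $(T\vert_{K^\perp})^{-1}$.
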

\begin{proof}
The statement above is \cite[Corollary~3.4]{BilJavPic}; a more general statement is due to Chillingworth,
see \cite{Chi80}.
\end{proof}

\noindent Assumption (b) in Theorem~\ref{thm:abstrgenericity} implies the transversality of the map
$\frac{\partial f}{\partial y}:A\to TY^*$ to the zero section of the cotangent bundle $TY^*$;
we will often refer to it as the \emph{transversality condition}. It guarantees that the set:
\[\mathfrak M=\Big\{(x_0,y_0)\in A:\tfrac{\partial f}{\partial y}(x_0,y_0)=0\Big\}\]
is a submanifold of $A$, and that the restriction of the projection $\Pi:\mathfrak M\to X$ is a nonlinear
Fredholm operator of index $0$. The regular values of this map are precisely the points
$x\in X$ such that the functional $A_x\ni y\mapsto f(x,y)\in\R$ is a Morse function, and the proof
of Theorem~\ref{thm:abstrgenericity} follows from the infinite dimensional version of Sard's theorem
\cite{Sma}.
\smallskip

We want to formulate an analogous result for smooth functions that are invariant by the action
of a Lie group.
\subsection{$G$-Morse functions}
Let us assume the following setup. Let $Y$ be a Hilbert manifold, $G$ a finite dimensional Lie group acting on $Y$, and let $\mathfrak g$ be the Lie algebra of $G$.
We will denote by $g\cdot y$ the action
of the element $g\in G$ on $y\in Y$; in this section, the action of $G$ on $Y$ will be assumed to be of class
$C^1$. For all $y\in Y$, let $\beta_y:G\to Y$ be the map of class $C^1$ defined by
$\beta_y(g)=g\cdot y$. The image of the differential $\mathrm d\beta_y(1):\mathfrak g\to T_yY$ is a
(finite dimensional) subspace $D_y$ of $T_yY$, which is the tangent space to the \emph{orbit} $G\cdot y$ at the
point $y$. If the dimension of $D_y$ does not depend on $y$, then $D=\{D_y\}_{y\in Y}$ is a smooth distribution on $Y$.
\begin{defin}
A submanifold $S\subset Y$ is said to be \emph{transverse to $D$ at $y\in S$} if $T_yY$ is the \emph{direct sum}
$T_yS\oplus D_y$. The submanifold $S$ is transverse to $D$ if it is transverse to $D$ at every $y\in S$.
\end{defin}
One proves easily the following:
\begin{lem}\label{thm:lem1}
Assume that the dimension of $D_y$ does not depend on $y$. Let $S$ be a submanifold of $Y$ which is
transverse to $D$ at some $y\in S$. Then:
\begin{itemize}
\item[(a)] there exists an open submanifold $S'\subset S$ containing $y$ which
is transverse to $D$;
\item[(b)] the set $G\cdot S=\big\{g\cdot s:g\in G,\ s\in S\big\}$ contains an open neighborhood of $y$ in $Y$.
\end{itemize}
\end{lem}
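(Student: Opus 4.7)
My plan is to handle the two parts separately: part~(a) follows from the openness of transversality in the space of bounded operators, and part~(b) is an application of the inverse function theorem to a slice map.

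For part~(a), I would use that transversality is an open condition. Since $\dim D_{y}$ is constant and $D$ is smooth, I can pick a chart of $Y$ about $y$ in which the tangent bundle $TY$ is trivialized, together with a smooth local frame $X_{1},\ldots,X_{k}$ of $D$ (here $k=\dim D_{y}$); I can also choose a chart of $S$ about $y$ that trivializes $TS$. Via these trivializations, identify $T_{s}Y$ with the fixed model Hilbert space $T_{y}Y$, and likewise identify $T_{s}S$ and $D_{s}$ with closed subspaces depending continuously on $s$. The sum map
\[
L_{s}\colon T_{y}S\oplus D_{y}\longrightarrow T_{y}Y,\qquad L_{s}(v,d)=\iota_{s}(v)+\pi_{s}(d),
\]
where $\iota_{s}$ and $\pi_{s}$ are the continuous identifications, is a continuous family of bounded operators with $L_{y}=\mathrm{Id}$. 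Since the set of toplinear isomorphisms is open in the operator norm, $L_{s}$ is an isomorphism for $s$ in an open neighborhood $S'\subset S$ of $y$, which is the desired open submanifold.

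For part~(b), I would apply the inverse function theorem to a well-chosen slice in $G$. Pick a vector space complement $\mathfrak h$ of the isotropy Lie algebra $\mathfrak g_{y}:=\Ker\mathrm d\beta_{y}(1)$ inside $\mathfrak g$, which exists because $\mathfrak g$ is finite-dimensional. Then $\mathrm d\beta_{y}(1)\vert_{\mathfrak h}\colon\mathfrak h\to D_{y}$ is a linear isomorphism. Let $H:=\exp(V)$ for a sufficiently small neighborhood $V$ of $0$ in $\mathfrak h$; this is a smooth submanifold of $G$ with $T_{1}H=\mathfrak h$. Consider the map
\[
\Psi\colon H\times S'\longrightarrow Y,\qquad \Psi(h,s)=h\cdot s.
\]
A direct differentiation of the $C^{1}$-action gives
\[
\mathrm d\Psi(1,y)(\xi,v)=\mathrm d\beta_{y}(1)\xi+v,\qquad \xi\in\mathfrak h,\ v\in T_{y}S'.
\]
By the choice of $\mathfrak h$ and by part~(a), this is a toplinear isomorphism $\mathfrak h\oplus T_{y}S'\to D_{y}\oplus T_{y}S'=T_{y}Y$. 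The $C^{1}$-inverse function theorem in Banach manifolds then yields that $\Psi$ is a local diffeomorphism at $(1,y)$, so its image contains an open neighborhood of $y$ in $Y$; since this image is contained in $G\cdot S$, part~(b) follows.

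The argument is essentially routine. The only mild point to be careful about is, in (a), arranging the trivializations so that $T_{s}S$ and $D_{s}$ become \emph{continuously varying} closed subspaces of a fixed model space, which is where the smoothness of $S$ and the constant-rank assumption on $D$ are used; in (b), the reduction to a transverse slice $H$ inside $G$ bypasses the noninvertibility of $\mathrm d\beta_{y}(1)$ on the whole of $\mathfrak g$, and the finite-dimensionality of $D_{y}$ means no complementation issue arises in the target.
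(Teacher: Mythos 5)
Your argument is correct, and both parts follow the same general plan as the paper: part (a) is an openness-of-transversality argument and part (b) is an inverse-function-theorem argument applied to the ``act then shift along $S$'' map. The only real difference is in (b). The paper applies the inverse function theorem to the full map $G\times S\ni(g,s)\mapsto g\cdot s\in Y$ at $(1,y)$, whose differential is $(\xi,v)\mapsto\mathrm d\beta_y(1)\xi+v$. This is surjective onto $T_yY$ but is injective only if the isotropy algebra $\mathfrak g_y=\Ker\mathrm d\beta_y(1)$ is trivial; in the general constant-rank setting one must appeal to the local submersion theorem (or quotient out $\mathfrak g_y$) to conclude that the image is open. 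You instead restrict from the outset to a transverse slice $H=\exp(V)$ with $T_1H=\mathfrak h$ a complement of $\mathfrak g_y$, which turns the differential into an actual toplinear isomorphism and lets you apply the genuine inverse function theorem. This buys you a cleaner statement — $\Psi$ is a local diffeomorphism, not merely an open map — at the small cost of introducing the auxiliary slice. Either route gives part (b), and your version is the more careful one when $\mathfrak g_y\ne 0$, which the hypotheses do allow.

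One small stylistic remark: in (a) you assert that the trivializations make $T_sS$ and $D_s$ into continuously varying closed subspaces of the fixed model. This is exactly what the paper's phrase ``continuous distribution of spaces of constant dimension'' encapsulates, so you are not adding content beyond what the paper invokes, just making the continuity of $L_s$ explicit. The underlying point you both use is that the set of toplinear isomorphisms is open in operator norm.
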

\begin{proof}
Part (a) follows readily from the differentiability of the action, observing that the condition of being
transverse to a continuous distributions of spaces of constant dimension is open. Part (b) follows
from the inverse function theorem applied to the function $G\times S\ni(g,s)\mapsto g\cdot s\in Y$ at
the point $(1,y)$.
\end{proof}
\begin{cor}\label{thm:countablefamsubmanifolds}
In the above setup, assume that $Y$ is second countable, and that the dimension of $D_y$ does
not depend on $y$. Then, there exists a countable family $S_n$, $n=1,\ldots$, of submanifolds of $Y$
that are transverse to $D$ and such that every orbit of $G$ intercepts some $S_n$.
\end{cor}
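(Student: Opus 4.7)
The plan is to construct the countable family by applying Lemma~\ref{thm:lem1} pointwise and then extracting a countable subcover using second countability.

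First, for every $y\in Y$, I would choose a submanifold $S_y\subset Y$ of $Y$ passing through $y$ with $T_yS_y$ any algebraic complement of $D_y$ in $T_yY$; for instance, in a local chart centered at $y$, take $S_y$ to be the image of a small ball in such a complement. By construction $S_y$ is transverse to $D$ at $y$, so by part~(a) of Lemma~\ref{thm:lem1} I can shrink $S_y$ to a smaller open submanifold (still called $S_y$) which is transverse to $D$ at every one of its points. Then part~(b) of the same lemma furnishes an open neighborhood $U_y\subset G\cdot S_y$ of $y$ in $Y$.

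The family $\{U_y\}_{y\in Y}$ is an open cover of $Y$. Since $Y$ is second countable, it is Lindel\"of, so there is a countable subcover $\{U_{y_n}\}_{n\in\N}$. Setting $S_n:=S_{y_n}$ gives a countable family of submanifolds of $Y$, each transverse to $D$ by construction. For any $y\in Y$, there exists $n$ with $y\in U_{y_n}\subset G\cdot S_n$, so $y=g\cdot s$ for some $g\in G$ and $s\in S_n$; equivalently, the orbit $G\cdot y$ meets $S_n$.

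There is no real obstacle here beyond verifying that the construction in Lemma~\ref{thm:lem1} can be carried out locally. The only point that deserves a moment's care is that a second countable Hilbert manifold is Lindel\"of, which is immediate from the existence of a countable basis of the topology. Everything else is a straightforward packaging of the two parts of Lemma~\ref{thm:lem1}.
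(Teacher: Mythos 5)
Your proof is correct and follows essentially the same route as the paper's: apply Lemma~\ref{thm:lem1} at each point to obtain a transverse submanifold $S_y$ and a neighborhood $U_y\subset G\cdot S_y$, then use second countability (Lindel\"of) to extract a countable subcover and take the corresponding $S_{y_n}$. The only difference is cosmetic: you spell out the local-chart construction of the initial $S_y$, which the paper leaves implicit in its invocation of part~(a) of the lemma.
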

\begin{proof}
By (a) of Lemma~\ref{thm:lem1}, every $y\in Y$ is contained in a submanifold $S_y$ of $Y$ which is transverse
to $D$. By (b) of Lemma~\ref{thm:lem1}, there exists an open neighborhood $U_y$ of $y$ in $Y$ such that every point
in $U_y$ belongs to the orbit of some element of $S_y$. The open covering $\{U_y\}_{y\in Y}$ of $Y$
admits a countable subcover $U_{y_1},\ldots,U_{y_n},\ldots$, by the assumption of second countability.
The family $S_n$ is given by $S_{y_n}$, $n=1,\ldots$.
\end{proof}

Let now $f:Y\to\R$ be a smooth $G$-invariant function.
\begin{defin}
Given a critical point $y$ of $f$, we say that $f$ is  \emph{$G$-nondegenerate at $y$} if
the Hessian $\mathrm H^f(y)$ of $f$ at $y$ is (strongly) nondegenerate when restricted
to some closed complement of $D_y$. We will say that $f$ is \emph{$G$-Morse} if it is $G$-nondegenerate at
every critical point $y$.
\end{defin}
It is easy to see that the nondegeneracy above does not depend on the choice of a complement of $D_y$.
Also, by $G$-invariance, it suffices to check the $G$-Morse property only at one point of each critical
orbit of $f$.
\begin{prop}\label{thm:MorseGMorse}
Under the assumption of Corollary~\ref{thm:countablefamsubmanifolds}, $f$ is $G$-Morse if and only if
$f\vert_{S_n}$ is a Morse function for all $n$.
\end{prop}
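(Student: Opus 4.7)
The plan is to prove both implications by exploiting the observation that, at any critical point $y$ of $f$, the tangent space $D_y$ to the orbit is contained in the kernel of the Hessian $\mathrm H^f(y)$, so that the transversality between $S_n$ and $D$ makes $T_yS_n$ a natural closed complement of $D_y$ on which one can simultaneously compute the intrinsic Hessian of $f\vert_{S_n}$ and test the $G$-nondegeneracy of $f$.

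First I would record two preliminary facts. (i) The critical points of $f$ lying in $S_n$ coincide with those of $f\vert_{S_n}$: one direction is trivial, and for the converse note that $\mathrm df(y)$ vanishes on $D_y$ by $G$-invariance (differentiating $f(g\cdot y)=f(y)$ along any smooth curve through $1$ in $G$), so if $\mathrm df(y)$ also vanishes on $T_yS_n$ then it vanishes on all of $T_yY = T_yS_n \oplus D_y$. (ii) At any critical point $y$ of $f$, one has $D_y\subset\Ker\mathrm H^f(y)$: given $v\in D_y$, choose a smooth curve $t\mapsto g_t$ in $G$ with $g_0=1$ and $\ddt\big|_{t=0}(g_t\cdot y)=v$; since each $g_t\cdot y$ is critical for $f$ by $G$-invariance, the differential $\mathrm df(g_t\cdot y)$ vanishes identically in $t$, and differentiating at $t=0$ yields $\mathrm H^f(y)(v,\cdot)=0$ on $T_yY$. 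In particular, at a critical point $y\in S_n$ the Hessian of $f\vert_{S_n}$ agrees with the restriction of $\mathrm H^f(y)$ to $T_yS_n\times T_yS_n$.

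For the forward direction, assume $f$ is $G$-Morse and let $y$ be a critical point of $f\vert_{S_n}$. By (i), $y$ is a critical point of $f$, so $\mathrm H^f(y)$ is strongly nondegenerate on some closed complement of $D_y$. Since $S_n$ is transverse to $D$, the subspace $T_yS_n$ is itself a closed complement of $D_y$, and because nondegeneracy is independent of the choice of complement, the restriction of $\mathrm H^f(y)$ to $T_yS_n$ is strongly nondegenerate. By (ii) this restriction is precisely the Hessian of $f\vert_{S_n}$ at $y$, so $f\vert_{S_n}$ is Morse at $y$. Conversely, assume each $f\vert_{S_n}$ is a Morse function and let $y$ be a critical point of $f$. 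By Corollary~\ref{thm:countablefamsubmanifolds}, the orbit $G\cdot y$ meets some $S_n$ at a point $y'=g\cdot y$, which is critical for $f$ by $G$-invariance and hence critical for $f\vert_{S_n}$ by (i). The Morse hypothesis on $f\vert_{S_n}$ yields strong nondegeneracy of $\mathrm H^f(y')$ on the closed complement $T_{y'}S_n$ of $D_{y'}$, so $f$ is $G$-nondegenerate at $y'$; the remark following the definition of $G$-Morse then transfers this to $y$.

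The main technical point, and really the only non-formal step, is item (ii)---that the tangent space to the orbit lies in the kernel of the Hessian at a critical point. Once it is in hand, the whole argument reduces to manipulation with closed complements together with the previously recorded invariance of nondegeneracy under the choice of complement. No serious analytic obstacle is anticipated: the differentiation in (ii) is carried out rigorously via the chain rule applied in a local chart to the composition $g\mapsto\mathrm df\bigl(\beta_y(g)\bigr)$, which is $C^1$ by the standing assumption on the action.
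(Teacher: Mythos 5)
Your proof is correct and follows essentially the same route as the paper's: identify critical points of $f$ on $S_n$ with those of $f\vert_{S_n}$ via $G$-invariance of $\mathrm df$, identify the intrinsic Hessian of the restriction with the restriction of $\mathrm H^f(y)$, and then pass between $G$-nondegeneracy and Morse nondegeneracy using transversality of $S_n$ to $D$ together with moving along the orbit via Corollary~\ref{thm:countablefamsubmanifolds}. Your item (ii), the kernel inclusion $D_y\subset\Ker\mathrm H^f(y)$, usefully makes explicit what the paper leaves tacit in the remark that $G$-nondegeneracy is independent of the chosen complement of $D_y$; one small imprecision is the attribution ``by (ii)'' for the identity between the Hessian of $f\vert_{S_n}$ and the restriction of $\mathrm H^f(y)$ to $T_yS_n\times T_yS_n$ --- that identity is the standard coordinate-free fact about Hessians of restrictions at a critical point of $f$ (it needs only $\mathrm df(y)=0$, which (i) already gives), not a consequence of the kernel inclusion.
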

\begin{proof}
First, we observe that $y\in S_n$ is a critical point of $f$ if and only if it is a critical point
for the restriction $f\vert_{S_n}$. This is because, by $G$-invariance, $\mathrm df(y)$ vanishes on the
space $\mathcal D_y$, which is complementary to $T_yS_n$. Moreover, the Hessian $\mathrm H^{f\vert_{S_n}}(y)$  at $y$
of the restriction $f\vert_{S_n}$ is the restriction to $T_yS_n\times T_yS_n$ of the Hessian
$\mathrm H^f(y)$.
Thus, if $f$ is $G$-Morse and $y\in S_n$ is a critical point of $f$, then $\mathrm H^{f\vert_{S_n}}(y)$
is strongly nondegenerate, i.e., $f\vert_{S_n}$ is a Morse function. Conversely, assume that $f\vert_{S_n}$ is a
Morse function, and let $y$ be a critical point of $f$. As observed above, one is free to choose
any point in the orbit of $y$, and by Corollary~\ref{thm:countablefamsubmanifolds}
it follows that it suffices to consider the case that $y\in S_n$ for some $n$. Then, $\mathrm H^f(y)$ is
strongly nondegenerate when restricted to $T_yS_n$, which is complementary to $D_y$, hence $f$ is
$G$-Morse.
\end{proof}
\subsection{Genericity of the $G$-Morse condition}
We will now assume that $X$ is a separable Banach manifold, $A\subset X\times Y$ is an open subset which
is invariant by the action of $G$ on the second variable, i.e., $(x,y)\in A$ implies $(x,g\cdot y)\in A$ for
all $g\in G$. Let $f:A\to\R$ be a function of class $C^k$, with $k\ge2$, which is $G$-invariant in the second variable,
i.e., it is such that $f(x,g\cdot y)=f(x,y)$ for all $(x,y)\in A$ and all $g\in G$.
For $x\in X$, let $A_x\subset Y$ be the open subset:
\[A_x=\big\{y\in Y:(x,y)\in A\big\};\]
clearly, $A_x$ is $G$-invariant.
We want to study the set of $x\in X$ such that the smooth map:
\[f_x=f(x,\cdot):A_x\longrightarrow\R\]
is $G$-Morse.
\begin{teo}\label{thm:abstrgenericitysmooth}
In the above setup, assume the following:
\begin{itemize}
\item[(a)] for all $(x_0,y_0)$ such that $y_0\in A_{x_0}$ is a critical point of $f_{x_0}$, the Hessian
$\mathrm H^{f_{x_0}}(y_0)$ is Fredholm;
\item[(b)] for all $(x_0,y_0)$ such that $y_0\in A_{x_0}$ is a critical point of $f_{x_0}$ and for
all $v\in\mathrm{Ker}\big(\mathrm H^{f_{x_0}}(y_0)\big)\setminus D_{y_0}$ there exists $w\in T_{x_0}X$
such that
\begin{equation}\label{eq:transversality}
\frac{\partial^2f}{\partial x\partial y}(x_0,y_0)\big(v,w\big)\ne0.
\end{equation}
\end{itemize}
Denote by $\Pi:X\times Y\to X$ the projection.
Then, the set of $x\in X$ such that the functional $A_x\ni y\mapsto f(x,y)\in\R$ is a $G$-Morse function is generic
in $\Pi(A)$.
\end{teo}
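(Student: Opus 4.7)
The plan is to reduce the equivariant genericity problem to countably many applications of Theorem~\ref{thm:abstrgenericity}, slicing along the good submanifolds produced by Corollary~\ref{thm:countablefamsubmanifolds}. By Proposition~\ref{thm:MorseGMorse}, for each $x\in\Pi(A)$ the function $f_x$ is $G$-Morse if and only if $f_x\vert_{S_n\cap A_x}$ is Morse for every $n$; hence if the set
\[R_n=\big\{x\in\Pi(A):f_x\vert_{S_n\cap A_x}\text{ is Morse}\big\}\]
is generic in $\Pi(A)$ for every fixed $n$, the desired set is $\bigcap_n R_n$, which is generic by Baire.

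The bulk of the work is then to apply Theorem~\ref{thm:abstrgenericity} to the restriction of $f$ to the open subset $A\cap(X\times S_n)$ of the product $X\times S_n$. Since $S_n$ has finite codimension in $Y$ equal to $\dim D_y$, it is itself a separable Hilbert manifold, and the restriction is still of class $C^k$ with $k\ge 2$. The crucial preliminary observation is that $G$-invariance forces $D_{y_0}\subset\Ker\bigl(\mathrm H^{f_{x_0}}(y_0)\bigr)$ at every critical point $y_0$: writing $V_\xi$ for the fundamental vector field associated to $\xi\in\mathfrak g$, the identity $\mathrm df_{x_0}(V_\xi)\equiv 0$, differentiated in an arbitrary direction and evaluated at $y_0$, yields $\mathrm H^{f_{x_0}}(y_0)\bigl(\,\cdot\,,V_\xi(y_0)\bigr)=0$. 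Since $T_{y_0}S_n$ is a closed complement to the finite-dimensional space $D_{y_0}$, the Hessian of the restriction $\mathrm H^{f_{x_0}\vert_{S_n}}(y_0)$ is simply $\mathrm H^{f_{x_0}}(y_0)$ restricted to $T_{y_0}S_n$, and being the compression of a self-adjoint Fredholm operator by a finite-codimension subspace containing the orbit directions already in its kernel, it remains Fredholm, verifying hypothesis (a) of Theorem~\ref{thm:abstrgenericity}.

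For hypothesis (b) of the non-equivariant theorem, I take $v\in\Ker\bigl(\mathrm H^{f_{x_0}\vert_{S_n}}(y_0)\bigr)\setminus\{0\}$. Using $D_{y_0}\subset\Ker(\mathrm H^{f_{x_0}}(y_0))$ and $T_{y_0}S_n\cap D_{y_0}=\{0\}$, one sees that $v\in\Ker(\mathrm H^{f_{x_0}}(y_0))\setminus D_{y_0}$, so hypothesis (b) of the present theorem supplies $w\in T_{x_0}X$ with $\tfrac{\partial^2 f}{\partial x\partial y}(x_0,y_0)(v,w)\ne 0$; since $v\in T_{y_0}S_n$, the same mixed partial computed in $X\times S_n$ gives the same nonzero value. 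Theorem~\ref{thm:abstrgenericity} then yields a generic subset of $\Pi\bigl(A\cap(X\times S_n)\bigr)$ over which $f_x\vert_{S_n\cap A_x}$ is Morse; since the condition is vacuous where $S_n\cap A_x=\emptyset$ and $\Pi\bigl(A\cap(X\times S_n)\bigr)$ is open in $X$, an elementary Baire argument (the boundary of an open set has empty interior) upgrades this to genericity in $\Pi(A)$.

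The main obstacle is the inclusion $D_{y_0}\subset\Ker(\mathrm H^{f_{x_0}}(y_0))$, which underpins the entire slicing strategy: without it, the restricted Hessian could fail to be Fredholm, and, more to the point, its kernel could contain orbit directions about which hypothesis (b) of the theorem has nothing to say (this is precisely why hypothesis (b) here is weakened by quotienting $\Ker$ by $D_{y_0}$). Once this algebraic point is in place, the rest of the proof is a clean assembly of Proposition~\ref{thm:MorseGMorse}, Theorem~\ref{thm:abstrgenericity}, and a Baire-category argument over the countable family $\{S_n\}$.
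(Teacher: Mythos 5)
Your proof is correct and follows essentially the same strategy as the paper: restrict $f$ to the slices $X\times S_n$, apply Theorem~\ref{thm:abstrgenericity} on each slice, and combine via Proposition~\ref{thm:MorseGMorse} and a Baire argument. You are somewhat more explicit than the paper on two points that it treats implicitly — the inclusion $D_{y_0}\subset\Ker\bigl(\mathrm H^{f_{x_0}}(y_0)\bigr)$, which is exactly what makes the kernel of the restricted Hessian land in $\Ker\bigl(\mathrm H^{f_{x_0}}(y_0)\bigr)\setminus D_{y_0}$, and the Baire step needed when $\Pi\bigl(A\cap(X\times S_n)\bigr)$ is a proper open subset of $\Pi(A)$ — both of which are welcome clarifications rather than departures from the paper's argument.
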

\begin{proof}
Consider the countable family $S_n$ of submanifolds of $X$ as in Corollary~\ref{thm:countablefamsubmanifolds}.
Let $A_n$ be the open subset of $X\times S_n$ defined by:
\[A_n=\big\{(x,s)\in X\times S_n:\exists\,g\in G\ \text{with}\ (x,g\cdot s)\in A\big\};\]
we will apply Theorem~\ref{thm:abstrgenericity} to the restriction $f_n$ of $f$ to $A_n$.
First observe that given $(x_0,y_0)\in A_n$ such that $\frac{\partial f_n}{\partial y}(x_0,y_0)=0$,
then the Hessian $\mathrm H^{(f_n)_{x_0}}(y_0)$ is Fredholm, because it is the restriction of
a Fredholm symmetric bilinear form to a finite codimensional space.

Now, given $v\in\mathrm{Ker}\big[\mathrm H^{(f_n)_{x_0}}(y_0)\big]\ne0$, then
$v\in\mathrm{Ker}\big(\mathrm H^{f_{x_0}}(y_0)\big)\setminus D_{y_0}$, and thus there exists $w\in T_{x_0}X$
such that $\frac{\partial^2f}{\partial x\partial y}(x_0,y_0)\big(v,w\big)\ne0$. Observe that
$\frac{\partial f}{\partial y}(x,y_0):T_{y_0}Y\to\R$ is identically zero on $D_{y_0}$ and its restriction
to $T_{y_0}S_n$ coincides with $\frac{\partial f_n}{\partial y}(x,y_0)$. Thus, for all $w\in T_{x_0}X$,
the second derivative
$\frac{\partial^2f}{\partial x\partial y}(x_0,y_0)\big(\cdot,w\big)$ vanishes identically on $D_{y_0}$, and its
restriction to $T_{y_0}S_n$ coincides with $\frac{\partial^2f_n}{\partial x\partial y}(x_0,y_0)\big(\cdot,w\big)$.

It follows that  $\frac{\partial^2f_n}{\partial x\partial y}(x_0,y_0)\big(v,w\big)\ne0$, hence
Theorem~\ref{thm:abstrgenericity} applies to each $f_n$.

\noindent
Denote by $B_n$ the subset of $X$ consisting of those $x$'s such that the functional $y\to f_n(x,y)$ is Morse.
By Proposition~\ref{thm:MorseGMorse}, the set of $x\in X$ such that $f$ is $G$-Morse is the countable intersection
$B=\bigcap_{n\ge1}B_n$, and each $B_n$ is generic in $\Pi(A)$, hence $B$ is generic in $\Pi(A)$. This concludes the proof.
\end{proof}
\end{section}
\begin{section}{The closed geodesic problem}
\subsection{Nondegenerate closed geodesics}
Let us now consider the closed geodesic problem in semi-Riemannian manifolds.
Let $M$ be a differentiable manifold, and let $\mathbf g$ be a semi-Riemannian metric
tensor on $M$.
A closed geodesic $\gamma:\mathds S^1\to M$ is \emph{nondegenerate} when there is no periodic
Jacobi field $J$ along $\gamma$ which is not a constant multiple of the tangent field
$\dot\gamma$. Equivalently, in the language of equivariant functions,
$\gamma$ is nondegenerate if it is a $G$-nondegenerate critical point of the geodesic action
functional $f_{\mathbf g}(y)=\frac12\int_{\mathds S^1}{\mathbf g}(\dot y,\dot y)\,\mathrm d\theta$, defined in
the Hilbert manifold $\Lambda$ of all closed curves $y:\mathds S^1\to M$ of Sobolev class $H^1$, and
$G=\mathds S^1$ is the circle, acting by right composition $(g\cdot y)(\theta)=y(g\theta)$.

Using the theory developed in Section~\ref{sec:equivnondegeneracy}, we want to study the genericity
of the set of all semi-Riemannian metrics ${\mathbf g}$ (having fixed index) that do not have
degenerate closed geodesics. Equivalently, this is the set of all metrics ${\mathbf g}$ whose
associated geodesic action functional on $\Lambda$ is $G$-Morse. Following the classical literature
(see \cite{Abr, Ano}), a metric tensor whose geodesic action functional is $G$-Morse will be called
a \emph{bumpy metric}.
However, the closed geodesic problem does not quite fit in the theory of Section~\ref{sec:equivnondegeneracy},
due to two different obstructions. First, there is a question of
regularity, in that \emph{the $\mathds S^1$-action on $\Lambda$ is not differentiable},
but only continuous. In particular, this implies that the construction of the sequence of \emph{good}
manifolds $S_n$ of Corollary~\ref{thm:countablefamsubmanifolds} does not work in this case.
Second, there is a problem with the transversality condition (b) of Theorem~\ref{thm:abstrgenericitysmooth},
which \emph{does not hold} in general for a class of closed geodesics that are obtained by iteration of
a closed geodesic.

We will first study the genericity of the nondegeneracy problem for \emph{prime} geodesics, i.e.,
geodesics that are not iterates, and then we will study the problem of iterates.
The question of lack of smoothness is studied with an alternative construction of special
submanifolds $S_n$ in the free loop space $\Lambda$.
\subsection{Good submanifolds of $\Lambda$}
\label{sub:goodsubmanifolds}
Recall that for $y\in\Lambda$, the tangent space $T_y\Lambda$ is identified with the Hilbert
space of all periodic vector fields of Sobolev class $H^1$ along $y$.
We will temporarily employ the following notations. Denote by $\Lambda^2$ the dense
subset of $\Lambda$ consisting of all curves of Sobolev class $H^2$. For $y\in\Lambda^2$,
let $D_y\subset T_y\Lambda$ denote the $1$-dimensional space spanned by the tangent field
$\dot y$. Let $\mathbf g$ be a fixed semi-Riemannian metric tensor in $M$, and denote by $f_{\mathbf g}:\Lambda\to\R$
the corresponding geodesic action functional.
\begin{defin}
A submanifold $S\subset\Lambda$ is \emph{good} for $\mathbf g$ if:
\begin{itemize}
\item given $y\in S$, if $y$ is a critical point of the restriction $f_{\mathbf g}\vert_S$
then $y$ is a critical point of $f_{\mathbf g}$ (i.e., $y$ is a closed $\mathbf g$-geodesic,
in particular, $y$ is of class $H^2$);
\item if $y\in S$ is a critical point of $f_{\mathbf g}$, then
$T_y\Lambda$ is direct sum of $T_yS$ and $D_y$.
\end{itemize}
\end{defin}
We will show the existence of sufficiently many good submanifolds of $\Lambda$ for all metrics $\mathbf g$; more precisely:
\begin{prop}\label{thm:exSngeo}
There exists a sequence $S_n$ of submanifolds of $\Lambda$ and an open subset $\mathcal A$ of $\Lambda$
such that:
\begin{itemize}
\item[(a)]  $\mathcal A$ contains the set of all closed curves in $\gamma:\mathds S^1\to M$ of class $C^2$;
\item[(b)] for all $n$ and for all semi-Riemannian metric $\mathbf g$ on $M$, $S_n$ is good for $\mathbf g$;
\item[(c)] for all $y\in\mathcal A$, the $\mathds S^1$-orbit of $y$ intercepts some $S_n$.
\end{itemize}
\end{prop}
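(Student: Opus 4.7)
The approach I would take is to realize the family $\{S_n\}$ as preimages of codimension-one submanifolds of $M$ under the evaluation map $\mathrm{ev}_0\colon\Lambda\to M$, $\mathrm{ev}_0(y)=y(0)$, supplemented by an open ``one-sided crossing'' condition whose role is to rule out spurious ``reflected'' critical points that would otherwise obstruct property~(b). Since $\mathrm{ev}_0$ is a smooth submersion, preimages of smooth hypersurfaces are automatically smooth codimension-one submanifolds of $\Lambda$; the substantive content of the construction lies in the crossing condition.

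More precisely, fix a countable atlas $\{(U_k,\phi_k)\}$ of $M$, and in each chart image $\phi_k(U_k)\subset\mathds R^n$ choose a countable dense family of oriented affine hyperplanes $H_{k,j}$, with designated positive and negative sides. Pull these back to $\tilde\Sigma_{k,j}=\phi_k^{-1}(H_{k,j})$, inheriting a side labeling $U_k^{\pm}$, and set
\[
S_{k,j}=\bigl\{y\in\Lambda\,:\,y(0)\in\tilde\Sigma_{k,j},\ \exists\,\epsilon>0\text{ with }y\bigl((-\epsilon,0)\bigr)\subset U_k^+\text{ and }y\bigl((0,\epsilon)\bigr)\subset U_k^-\bigr\}.
\]
Continuity of evaluation at each fixed time makes the crossing condition an open condition on $\Lambda$, so after re-enumeration each $S_n$ is an open subset of the smooth codimension-one submanifold $\mathrm{ev}_0^{-1}(\tilde\Sigma_{k,j})$, hence itself a smooth codimension-one submanifold of $\Lambda$. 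Let $\mathcal A$ be the union of the $\mathds S^1$-orbits of the $S_n$; this is open in $\Lambda$. Property~(c) for non-constant $C^2$ loops $\gamma$ is then immediate: pick $\theta_0$ with $\dot\gamma(\theta_0)\neq0$, pick a chart containing $\gamma(\theta_0)$ and a hyperplane of the countable family through $\gamma(\theta_0)$ transverse to $\dot\gamma(\theta_0)$, then the rotated loop $\gamma(\cdot+\theta_0)$ lies in the corresponding $S_{k,j}$ after possibly swapping the $\pm$ labels.

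The direct-sum condition in the definition of goodness is easy: at a closed $\mathbf g$-geodesic $y\in S_n$, the curve $y$ is smooth, the crossing condition forces $\dot y(0)\notin T_{y(0)}\tilde\Sigma_{k,j}$, and hence $\dot y\notin T_yS_n$, giving $T_y\Lambda=T_yS_n\oplus D_y$. The substantive claim is that critical points of $f_{\mathbf g}\vert_{S_n}$ are genuine closed geodesics. By the standard bootstrap applied to the Euler--Lagrange equation, a critical point $y$ is $C^\infty$ on $(0,1)\subset\mathds S^1$ and satisfies the geodesic equation there; the boundary term induced by the constraint $y(0)\in\tilde\Sigma_{k,j}$ forces a tangent jump
\[
\dot y(1^-)-\dot y(0^+)=\lambda\,\nu,
\]
where $\nu$ spans $\bigl(T_{y(0)}\tilde\Sigma_{k,j}\bigr)^{\perp_{\mathbf g}}$ and $1^-,0^+$ denote one-sided limits at the identified endpoint. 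Energy conservation along a geodesic gives $\mathbf g(\dot y(0^+),\dot y(0^+))=\mathbf g(\dot y(1^-),\dot y(1^-))$, which collapses into $\lambda\bigl[2\mathbf g(\dot y(0^+),\nu)+\lambda\,\mathbf g(\nu,\nu)\bigr]=0$. In the $\mathbf g$-nondegenerate case $\mathbf g(\nu,\nu)\neq0$, the only nonzero root makes $\dot y(1^-)$ the $\mathbf g$-reflection of $\dot y(0^+)$ across the tangent plane, whence $\mathbf g(\nu,\dot y(0^+))$ and $\mathbf g(\nu,\dot y(1^-))$ have opposite signs; however, the one-sided crossing condition forces these two inner products to have the same sign, since both quantify ``moving from the positive to the negative side'' of $\tilde\Sigma_{k,j}$. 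This contradiction yields $\lambda=0$, so $y$ extends smoothly to a true closed $\mathbf g$-geodesic.

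The main obstacle I anticipate is the preceding sign argument in the $\mathbf g$-degenerate case $\mathbf g(\nu,\nu)=0$, corresponding to $\tilde\Sigma_{k,j}$ being lightlike at $y(0)$: there the quadratic in $\lambda$ becomes linear and the reflection interpretation fails, so the one-sided crossing does not by itself rule out nontrivial Lagrange multipliers. This is consistent with the paper's remark that the lightlike situation demands a supplementary perturbation argument (Proposition~\ref{thm:prop2} and Corollary~\ref{thm:cor3}), and signals that the construction of $\{S_n\}$ either needs supplementing with additional submanifolds adapted to the lightlike configuration or that the degenerate case is absorbed into the later perturbation theory. Constant curves are automatically excluded from $\mathcal A$ under this construction, compatibly with the fact that the direct-sum condition degenerates for loops with $D_y=0$.
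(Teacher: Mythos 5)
Your construction is a genuinely different route from the paper's: you build Poincar\'e--type sections $S_{k,j}=\mathrm{ev}_0^{-1}(\tilde\Sigma_{k,j})$ cut down by a one-sided crossing condition, and verify goodness by a Lagrange-multiplier/corner analysis at $\theta=0$. The paper instead builds $S_\gamma$ inside an exponential chart $\varphi:U\to T_\gamma\Lambda$ around a fixed $C^2$ curve $\gamma$, taking $S_\gamma=\varphi^{-1}(V_0)$ for $V_0$ a neighborhood of $0$ in a hyperplane $H_0\subset T_\gamma\Lambda$ which is \emph{closed in the $L^2$-topology} and transverse to $\dot\gamma$ (for instance the $L^2$-orthocomplement of $\dot\gamma$ with respect to a fixed auxiliary Riemannian metric). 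Because $H_0$ is $L^2$-closed of finite codimension, a critical point $y$ of $f_{\mathbf g}\vert_{S_\gamma}$ has $\mathrm df_{\mathbf g}(y)$ automatically bounded in $L^2$, a bootstrap gives $y\in H^2$, and then $\mathrm df_{\mathbf g}(y)$ vanishes on all of $T_y\Lambda$. This replaces your corner analysis and, crucially, never refers to $\mathbf g$ at all, only to the auxiliary Riemannian data.

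There are three gaps in your version, and they are not cosmetic. (1) The lightlike case you flag is fatal, not deferrable. Property~(b) requires $S_n$ to be good for \emph{every} semi-Riemannian metric $\mathbf g$, and this universality is used essentially downstream (one does not know in advance which metrics are bumpy). For any fixed hypersurface $\tilde\Sigma_{k,j}$ there are metrics making it lightlike somewhere, and for such a metric your argument cannot rule out a nonzero corner $\lambda\nu$. The paper's Proposition~\ref{thm:prop2} and Corollary~\ref{thm:cor3} perturb the \emph{metric} to avoid lightlike closed geodesics; they play no role in the construction of good submanifolds and cannot rescue a choice of $S_n$ that fails to be good for some metrics. (2) The crossing condition is not open in the $H^1$-topology on $\mathrm{ev}_0^{-1}(\tilde\Sigma_{k,j})$. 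A curve satisfying the one-sided condition comes arbitrarily close to $\tilde\Sigma_{k,j}$ near $\theta=0$, and an $H^1$-small perturbation (with $y(0)$ still on $\tilde\Sigma_{k,j}$) can oscillate back and forth across the hypersurface in any interval $(0,\epsilon)$; so ``continuity of evaluation at each fixed time'' does not deliver openness, and your $S_{k,j}$ need not be a submanifold of $\Lambda$. (3) The claim that the one-sided crossing forces $\dot y(0)\notin T_{y(0)}\tilde\Sigma_{k,j}$ is false: a $C^2$ closed geodesic can cross $\tilde\Sigma_{k,j}$ with an inflection-type contact, tangent at $\theta=0$ yet strictly on one side before and the other side after, in which case $\dot y\in T_yS_{k,j}$ and the direct-sum condition in goodness fails. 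The paper avoids all three issues at once because $T_yS_\gamma$ is, by construction, $L^2$-complementary to $D_y$ near $\gamma$ and this is an $H^1$-open, metric-independent transversality.
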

\begin{proof}
First, we will show that through every $\gamma\in\Lambda$ of class $C^2$
there exists a submanifold $S_\gamma$ of $\Lambda$ which is good for all $\mathbf g$
(in fact, our construction will not depend on the choice of $\mathbf g$).
Let $\mathbf g_{\mathrm R}$ be a fixed auxiliary Riemannian metric on $M$, and let $\exp$
denote its exponential map; the Hilbert structure on all vector bundles involved in this proof
use the metric $\mathbf g_{\mathrm R}$.
Denote by $\mathcal E\to\Lambda$ the \emph{mixed} vector bundle over $\Lambda$, whose
fiber at the point $y\in\Lambda$ is the Hilbert space of all vector fields of class $L^2$
along $y$.  There is a natural continuous inclusion $T\Lambda\hookrightarrow\mathcal E$, and
$D=(D_y)_{y\in\Lambda^2}$ extends to a continuous subbundle of $\mathcal E$.
Let $\gamma\in\Lambda$ be of class $C^2$ and let $\varphi:U\to T_\gamma\Lambda$ be the chart defined on
an open neighborhood $U$ of $\gamma$ in $\Lambda$ and taking values in an open neighborhood
of $0$ in the Hilbert space $T_\gamma\Lambda$ which is constructed via exponential map.
More precisely, given a vector field $V$ of class $H^1$ along $\gamma$, near $0$, the inverse
image $z=\varphi^{-1}(V)$ is the $H^1$-curve defined by $z(\theta)=\exp_{\gamma(\theta)}\big(V(\theta)\big)$
for all $\theta\in\mathds S^1$. Using the chart $\varphi$, one has a trivialization of $\mathcal E\vert_U$
as follows: for all $z\in U$, the differential $\mathrm d\varphi(z):T_z\Lambda\to T_\gamma\Lambda$
extends continuously to an isomorphism of $\mathcal E_z$ onto $\mathcal E_\gamma$. Thus, we have
a vector bundle isomorphism $\mathcal E\vert_U\cong U\times\mathcal E_\gamma$.

A good submanifold $S_\gamma$ through $\gamma$ can now be chosen as follows. Let $H_0$ be a hyperplane
of $T_\gamma\Lambda$ which is closed in $T_\gamma\Lambda$ relatively to the $L^2$-topology, and that
does not contain $\dot\gamma$. For instance, $H_0$ can be chosen as the $L^2$-orthogonal space to
$\dot\gamma$ in $T_\gamma\Lambda$.
Define $S_\gamma$ as the inverse image $\varphi^{-1}(V_0)$ of a small neighborhood $V_0$ of $0$
in $H_0$. The function $z\mapsto T_zS_\gamma$ defined on $S_\gamma$ is continuous relatively to the $H^1$-topology,
regarding $T_zS_\gamma$ as a subspace of $\mathcal E_z$. Similarly, $z\mapsto D_z$ is continuous
relatively to the $H^1$-topology when $D_z$ is seen as a subspace of $\mathcal E_z$.
Thus, the transversality of $T_zS_\gamma$ and $D_z$ is maintained for $z\in\Lambda^2\cap S_\gamma$ near $\gamma$
in the $H^1$-topology.

Let us show that $S_\gamma$ is good. Assume that $y\in S_\gamma$ is a critical point of $f_{\mathbf g}\vert_{S_\gamma}$.
Since $T_yS_\gamma$ is $L^2$-closed in $T_y\Lambda$, then the linear operator $\mathrm df_{\mathbf g}(y)$ on $T_y\Lambda$,
given by:
\[\mathrm df_{\mathbf g}(y)V=\int_{\mathds S^1}\mathbf g(\mathbf DV,\dot y)\,\mathrm d\theta\]
is bounded in the $L^2$-topology, because it vanishes on an $L^2$-closed finite codimensional subspace
of $T_y\Lambda$ (here, $\mathbf DV$ is the covariant derivative of $V$ along $y$ relatively to the
Levi--Civita connection of $\mathbf g$).
By a standard boot-strap argument, it follows easily that $y$ is a curve of class $H^2$; then, since $T_y\Lambda=T_yS_\gamma\oplus D_y$,
and $\mathrm df_{\mathbf g}(y)$ vanishes on $D_y$, it follows that $y$ is a critical point
of $f_{\mathbf g}$ in $\Lambda$.
Hence, $S_\gamma$ is good.

Now, we observe that since the orbit $\mathds S^1\cdot\gamma$ intercepts
transversally $S_\gamma$, then by continuity all orbits sufficiently near $\mathds S^1\cdot\gamma$ also
intercept $S_\gamma$. By possibly reducing the size of $S_\gamma$, we can therefore assume that $\mathds S^1\cdot S_\gamma$
contains an open neighborhood $U_\gamma$ of $S_\gamma$ in $\Lambda$.
Let $\mathcal A$ be the open subset of $\Lambda$ given by the union of all $U_\gamma$, as $\gamma$ runs in the
set of curves of class $C^2$ in $\Lambda$. Since $\Lambda$ is second countable, then also $\mathcal A$ is second countable,
and thus there exists a countable subfamily $U_{\gamma_1},U_{\gamma_2},\ldots,U_{\gamma_n},\ldots$ whose union
is equal to $\mathcal A$. The desired countable family of good submanifolds is given by $S_n=S_{\gamma_n}$, $n\ge1$.
\end{proof}
Thus, we have the analogue of Proposition~\ref{thm:MorseGMorse} for geodesic functionals:
\begin{cor}
A semi-Riemannian metric $\mathbf g$ on $M$ is bumpy if and only if the restriction of the geodesic
action functional $f_{\mathbf g}$ to each submanifold $S_n$ as in Proposition~\ref{thm:exSngeo} is a Morse function.\qed
\end{cor}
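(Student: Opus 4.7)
The plan is to transport the proof of Proposition~\ref{thm:MorseGMorse} to the present setting; the role played there by the transversality of the submanifolds $S_n$ to the (smooth) distribution $D$ is now played by the goodness property of the submanifolds $S_n$ furnished by Proposition~\ref{thm:exSngeo}. The proof splits in the two obvious implications.

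For the forward implication, assume $\mathbf g$ is bumpy, fix $n$, and let $y\in S_n$ be a critical point of the restriction $f_{\mathbf g}\vert_{S_n}$. The first clause in the definition of a good submanifold gives that $y$ is in fact a critical point of $f_{\mathbf g}$ on $\Lambda$, hence a closed $\mathbf g$-geodesic of class $H^2$ (indeed smooth). The second clause gives the direct sum decomposition $T_y\Lambda=T_yS_n\oplus D_y$, so $T_yS_n$ is a closed complement of $D_y$. Since $\mathbf g$ is bumpy, $f_{\mathbf g}$ is $G$-nondegenerate at $y$, i.e., $\mathrm H^{f_{\mathbf g}}(y)$ is strongly nondegenerate on some (hence every) closed complement of $D_y$. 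Noting that $\mathrm H^{f_{\mathbf g}\vert_{S_n}}(y)$ is the restriction of $\mathrm H^{f_{\mathbf g}}(y)$ to $T_yS_n\times T_yS_n$, one concludes that $f_{\mathbf g}\vert_{S_n}$ is Morse at $y$.

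For the converse, assume that $f_{\mathbf g}\vert_{S_n}$ is Morse for every $n$, and let $y$ be any critical point of $f_{\mathbf g}$. Since closed $\mathbf g$-geodesics are smooth, in particular of class $C^2$, property (a) of Proposition~\ref{thm:exSngeo} gives $y\in\mathcal A$, and property (c) then yields some $n$ and some $g\in\mathds S^1$ such that $g\cdot y\in S_n$. By the $\mathds S^1$-invariance of $f_{\mathbf g}$, we may replace $y$ by $g\cdot y$ and assume $y\in S_n$, still a critical point of $f_{\mathbf g}$, and hence a critical point of $f_{\mathbf g}\vert_{S_n}$. The Morse hypothesis on $f_{\mathbf g}\vert_{S_n}$ gives strong nondegeneracy of the restriction of $\mathrm H^{f_{\mathbf g}}(y)$ to the closed complement $T_yS_n$ of $D_y$, which is precisely $G$-nondegeneracy of $f_{\mathbf g}$ at $y$. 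This proves that $\mathbf g$ is bumpy.

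The only point where any care is needed — and which would be the obstacle if one tried to apply Proposition~\ref{thm:MorseGMorse} verbatim — is that here the $\mathds S^1$-action on $\Lambda$ is not smooth, so a critical point of $f_{\mathbf g}\vert_{S_n}$ is not a priori a critical point of $f_{\mathbf g}$; the goodness property of the $S_n$'s has been tailored in Proposition~\ref{thm:exSngeo} to supply exactly this missing step, via the boot-strap argument already carried out in its proof. Everything else is a formal adaptation of the argument for Proposition~\ref{thm:MorseGMorse}.
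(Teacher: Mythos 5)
Your proof is correct and is essentially the argument the paper leaves implicit (the corollary carries a $\qed$ with no written proof, as it is meant to follow directly from Proposition~\ref{thm:exSngeo} by the same reasoning as Proposition~\ref{thm:MorseGMorse}). You have correctly identified that the goodness property replaces the transversality-to-$D$ hypothesis and, in particular, supplies the step from critical point of $f_{\mathbf g}\vert_{S_n}$ to critical point of $f_{\mathbf g}$ that the non-smoothness of the $\mathds S^1$-action would otherwise obstruct.
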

\subsection{Prime geodesics}
Let us introduce some terminology. The \emph{stabilizer} of a non constant
closed curve $y:\mathds S^1\to M$, i.e., the set of $g\in\mathds S^1$ such that $y(g\theta)=y(\theta)$
for all $\theta\in\mathds S^1$, denoted by $\mathrm{stab}(y)$, is easily seen to be a finite cyclic
group of $\mathds S^1$. The orbit of $y$, $\mathds S^1\cdot y$,
which is homeomorphic to the quotient $\mathds S^1/\mathrm{stab}(y)$,
is therefore homeomorphic to $\mathds S^1$; when $y$ is a curve of class $C^2$, then
$\mathds S^1\cdot y$ is a $C^1$-submanifold of $\Lambda$.

Let us say that a curve $y\in\Lambda$ is \emph{prime} if $\mathrm{stab}(y)$ is trivial, i.e., if
$y$ is not the iterate of some other closed curve in $M$. The subset $\widetilde\Lambda\subset\Lambda$
consisting of prime curves is open in $\Lambda$, and it is clearly invariant by the action
of $\mathds S^1$.
\begin{prop}[Weak semi-Riemannian bumpy metric theorem]
\label{thm:weakbumpy}
Let $M$ be a compact manifold, $k\ge2$, and let $i$ be an integer in $\big\{0,\ldots,\mathrm{dim}(M)\big\}$.
Denote by $\mathrm{Met}(M,i;k)$ the set of all semi-Riemannian metric tensor of class $C^k$
on $M$ having index equal to $i$, endowed with the topology of $C^k$ convergence.
The subset of $\mathrm{Met}(M,i;k)$ consisting of all metric tensors all of whose prime geodesics are
nondegenerate is generic.
\end{prop}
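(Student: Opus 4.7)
The plan is to apply the abstract genericity Theorem~\ref{thm:abstrgenericity} using the good submanifolds from Proposition~\ref{thm:exSngeo}, which play the role that Corollary~\ref{thm:countablefamsubmanifolds} plays in the proof of Theorem~\ref{thm:abstrgenericitysmooth}; this substitution is forced by the fact that the $\mathds S^1$-action on $\Lambda$ is not differentiable. Take $X=\mathrm{Met}(M,i;k)$, an open subset of the Banach space of $C^k$ symmetric $(0,2)$-tensors on $M$, and consider the $C^k$ functional
\[
F\colon X\times\Lambda\longrightarrow\R,\qquad F(\mathbf g,y)=\tfrac12\int_{\mathds S^1}\mathbf g(\dot y,\dot y)\,\mathrm d\theta,
\]
which is $\mathds S^1$-invariant in the second variable. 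Let $\{S_n\}$ and $\mathcal A$ be as in Proposition~\ref{thm:exSngeo} and set $\widetilde S_n=S_n\cap\widetilde\Lambda$; since every closed $\mathbf g$-geodesic is of class $C^2$, it lies in $\mathcal A$, and each $\mathds S^1$-orbit of a prime geodesic meets some $\widetilde S_n$ transversally.

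For each $n$ I would apply Theorem~\ref{thm:abstrgenericity} to $F_n:=F|_{X\times\widetilde S_n}$. Hypothesis (a), Fredholmness of the Hessian of $F_n(\mathbf g_0,\cdot)$, is standard: the Hessian of the geodesic action functional on $T_{y_0}\Lambda$ is a compact perturbation of an invertible self-adjoint operator in the $H^1$-product, and restricting to the finite-codimensional subspace $T_{y_0}\widetilde S_n$ preserves this. The substantive step is hypothesis (b). Given a critical point $(\mathbf g_0,y_0)$ of $F_n$, the ``good'' property of $S_n$ forces $y_0$ to be an honest prime closed $\mathbf g_0$-geodesic and identifies $\ker H^{(F_n)_{\mathbf g_0}}(y_0)$ with the space of periodic Jacobi fields along $y_0$ modulo $\R\cdot\dot y_0$. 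For a nonzero such $v$, one must exhibit a symmetric $(0,2)$-tensor $w$ of class $C^k$ on $M$ with
\[
\frac{\partial^2F}{\partial\mathbf g\,\partial y}(\mathbf g_0,y_0)(v,w)\neq0.
\]
Differentiating $F$ directly identifies the left-hand side with an integral along $y_0$ of a first-order linear differential expression in $w$, with coefficients built from $v$, $\dot y_0$, and their covariant derivatives.

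The production of $w$ is the main obstacle, and it is here that primality is essential. Because $y_0$ is prime and non-constant, there exists a proper open arc $J\subset\mathds S^1$ on which $y_0$ is an embedding visited exactly once, and on which $v$ does not vanish identically. In a tubular neighborhood of $y_0(J)$ one introduces Fermi-type coordinates adapted to $y_0$ and $v$, and takes $w=\chi\cdot w_0$, where $\chi$ is a bump function supported in the tube and $w_0$ is an algebraic expression in the adapted frame designed to make the integrand pointwise nonnegative and strictly positive at some point of $J$. The primality hypothesis ensures that no other portion of $y_0$ traverses the support of $\chi$, so the integral cannot cancel; if $y_0$ were an $m$-fold iterate ($m>1$), every localized perturbation would be summed $m$ times over the iterates and destructive cancellation on certain degenerate $v$ could not be ruled out, which is precisely the difficulty forcing the separate treatment in Subsection~\ref{sub:iterate}.

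Granted (b), Theorem~\ref{thm:abstrgenericity} yields for each $n$ a generic subset $B_n\subset\mathrm{Met}(M,i;k)$ on which $F(\mathbf g,\cdot)|_{\widetilde S_n}$ is Morse. By the geodesic version of Proposition~\ref{thm:MorseGMorse} (whose proof transfers verbatim once the good submanifolds $\widetilde S_n$ are in hand), combined with the fact that every prime closed geodesic lies on an $\mathds S^1$-orbit meeting some $\widetilde S_n$, the countable intersection $\bigcap_n B_n$ is exactly the set of metrics all of whose prime closed geodesics are nondegenerate. A Baire category argument then gives genericity of this intersection in $\mathrm{Met}(M,i;k)$, completing the proof. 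The only non-routine piece of the program is the construction of the perturbation $w$ carried out in the third paragraph.
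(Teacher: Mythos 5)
Your proof matches the paper's almost line-for-line: the $\mathds S^1$-invariant functional $F$ on $\mathrm{Met}(M,i;k)\times\widetilde\Lambda$, the good submanifolds of Proposition~\ref{thm:exSngeo} standing in for Corollary~\ref{thm:countablefamsubmanifolds} (to circumvent the non-smoothness of the $\mathds S^1$-action), reduction to Theorem~\ref{thm:abstrgenericity} on each slice $X\times\widetilde S_n$, Fredholmness of the index form as a compact perturbation, a primality-based localization for the transversality hypothesis, and a Baire intersection of the resulting generic sets.

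The one spot I would tighten is the transversality step. You ask for an arc $J$ on which $v$ ``does not vanish identically,'' but the Fermi-type coordinates adapted to both $y_0$ and $v$, and the perturbation lemma \cite[Lemma~2.4]{BilJavPic} that prescribes $h$ together with its covariant derivative \emph{in the direction $v$}, both require $v$ and $\dot y_0$ to be \emph{linearly independent} on the arc, which is strictly stronger than $v\neq 0$. The paper secures this by observing that the set of instants at which $J_0$ is a multiple of $\dot\gamma_0$ is finite; the essential point is that this set cannot contain an interval, since there the Jacobi equation combined with the geodesic equation would force $J_0$ to be an affine-in-$\theta$ multiple of $\dot\gamma_0$, and periodicity would then make $J_0$ a constant multiple of $\dot\gamma_0$, contradicting nontriviality. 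Once transversality of $v$ is available on a sub-arc, the paper's choice is cleaner than engineering a pointwise nonnegative integrand: take $h\equiv 0$ along $\gamma_0$ and prescribe $\nabla_{J_0}h=K_\theta$; then the first term of the integrand in \eqref{eq:transvgeo} vanishes identically and the condition reduces to $\int_{I_0} K_\theta\big(\dot\gamma_0,\dot\gamma_0\big)\,\mathrm d\theta\neq 0$, which is immediate to arrange. Apart from these two refinements the argument is the same.
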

\begin{rem}
It should be observed that $\mathrm{Met}(M,i;k)$ is non empty under some assumptions on the topology of
$M$. More precisely, $\mathrm{Met}(M,i;k)\ne\emptyset$ if and only if $M$ admits a $C^k$-distribution of
rank $i$. Namely, assume such a distribution $\Delta\subset TM$ is given, and let $\mathbf g_{\mathrm R}$ be an arbitrary
smooth Riemannian metric on $M$. Then, a semi-Riemannian metric $\mathbf g$ of index $i$ on $M$ can be defined
by setting $\mathbf g=\mathbf g_{\mathrm R}$ on the $\mathbf g_{\mathrm R}$-orthogonal complement $\Delta^\perp$ of $\Delta$,
$\mathbf g=-\mathbf g_{\mathrm R}$ on $\Delta$, and by setting $\mathbf g(v,w)=0$ for $v\in\Delta$ and $w\in\Delta^\perp$ $\mathbf g$-orthogonal.
Conversely, assume that a $C^k$-metric $\mathbf g$ of index $i$ is given, and let $T$ be the unique
$\mathbf g_{\mathrm R}$-symmetric $(1,1)$  tensor on $M$ such that $\mathbf g=\mathbf g_{\mathrm R}(T\cdot,\cdot)$.
Then, a distribution of rank $i$ on $M$ is given by considering at each point $p\in M$ the direct sum of the
eigenspaces of $T_p$ relative to negative eigenvalues. A standard functional analytical argument\footnote{%
Regularity of such distribution follows readily from the smoothness of the function that associates
to each symmetric matrix $A$ the orthogonal projection $P_A^-$ onto the direct sum of its negative eigenspaces.
This smoothness is proved using a formula that gives $P_A^-$ as a line integral:
let $\gamma_j$, $j=1,\ldots,i$ be smooth curves in the complex plane that make small circles
around the negative eigenvalues of $A$ oriented counterclockwise. For $A'$ near $A$, the orthogonal
projection $P_{A'}^-$ can be written as the line integral
$P_{A'}^- = \sum_{j=1}^i \frac1{2\pi\mathfrak i} \oint_{\gamma_j}(z-A')^{-1}\,\mathrm dz$.}
shows that the distribution obtained in this way has the same regularity as the metric $\mathbf g$.
It is known that, for an orientable manifold $M$, the existence of a smooth distribution of rank one
(a line field), is equivalent to the vanishing of the Euler class of $M$.
Except for the case $i=1$, it is in general a quite difficult task to
give a universal necessary and sufficient condition for the existence of distributions of rank $i$ on $M$.
\end{rem}
\begin{proof}[Proof of Proposition~\ref{thm:weakbumpy}]
Consider the $C^2$ function $f:\mathrm{Met}(M,i;k)\times\widetilde\Lambda\to\R$ defined by:
\[f(\mathbf g,y)=f_{\mathbf g}(y)=\tfrac12\int_{\mathds S^1}\mathbf g(\dot y,\dot y)\,\mathrm d\theta.\]
A point $(\mathbf g_0,y_0)$ is such that $\frac{\partial f}{\partial\mathbf g}(\mathbf g_0,y_0)=0$
if and only if $y_0$ is a prime closed $\mathbf g_0$-geodesic in $M$. Here, $\mathrm{Met}(M,i;k)$
is seen as an open subset of the Banach space of all $(0,2)$-symmetric tensors of class $C^k$ on $M$,
endowed with the $C^k$-topology.
Clearly, $f$ is invariant by the action of $G=\mathds S^1$ on the second variable but, as observed
above, such action is only continuous.

Given $(\mathbf g_0,y_0)$ such that $\frac{\partial f}{\partial\mathbf g}(\mathbf g_0,y_0)=0$, the
Hessian $\mathrm H^{f_{\mathbf g_0}}(y_0)=\frac{\partial^2f}{\partial y^2}(\mathbf g_0,y_0)$ is identified
with the standard index form of the closed geodesic $y_0$, given by the symmetric bilinear form
defined in the Hilbert space of periodic vector fields of class $H^1$ along $y_0$:
\[I_{\mathbf g_0,y_0}(V,W)=\int_{\mathds S^1}\mathbf g_0\big(V',W')+\mathbf g_0(R_0(\dot y_0,V)\,\dot y_0,W\big)\,\mathrm d\theta,\]
where $R_0$ is the curvature tensor\footnote{%
The curvature tensor $R$ of a connection $\nabla$ is chosen with
the sign convention: $R(X,Y)=[\nabla_X,\nabla_Y]-\nabla_{[X,Y]}$.}
of $\mathbf g_0$ and the superscript prime means covariant derivative of
vector fields along $y_0$ induced by the Levi--Civita connection of $\mathbf g_0$.
This is a Fredholm bilinear form, being a compact perturbation of the strongly nondegenerate
symmetric bilinear form:
\[(V,W)\longmapsto\int_{\mathds S^1}\mathbf g_0\big(V',W')\,\mathrm d\theta.\]
Let us now consider the manifold $\widetilde\Lambda$ of all prime curves in $\Lambda$, and the continuous
action of the group $G=\mathds S^1$.
Using the countable family of good submanifolds $S_n$ constructed\footnote{%
Observe that the statement
of Proposition~\ref{thm:exSngeo} holds if one replaces $\Lambda$ with $\widetilde\Lambda$.}
in Proposition~\ref{thm:exSngeo},
the genericity result of Theorem~\ref{thm:abstrgenericitysmooth}
applies to the geodesic action functional $f$ applied to the Banach space $X$ of all symmetric
$(0,2)$-tensors of class $C^k$ on $M$, $Y=\Lambda$, and $A=\mathrm{Met}(M,i;k)\times\widetilde\Lambda$, once
we show that the transversality condition \eqref{eq:transversality} holds.

In our geodesic setup, condition \eqref{eq:transversality} reads as follows (see \cite[Section~4]{BilJavPic}
for details). Given a metric $\mathbf g_0$ of class $C^k$ in $M$, a prime degenerate $\mathbf g_0$-geodesic $\gamma_0$
in $M$, and a nontrivial Jacobi field $J_0$ along $\gamma_0$ (here, non trivial means that $J_0$ is not
a constant multiple of the tangent field $\dot\gamma_0$), there must exist a symmetric $(0,2)$-tensor
$h$ of class $C^k$ in  $M$ such that the following holds:
\begin{equation}\label{eq:transvgeo}
\int_{\mathds S^1}\left(h(\dot\gamma_0,J_0')+\tfrac12\nabla h(J_0,\dot\gamma_0,\dot\gamma_0)\right)\mathrm d\theta\ne0.
\end{equation}
Here, $\nabla$ is an arbitrarily fixed symmetric connection in $M$ (the left hand side of \eqref{eq:transvgeo}
does not depend on the choice of the connection), and $J_0'$ is the covariant derivative of $J_0$ along
$\gamma_0$ relatively to such connection. Given $\gamma_0$ and $J_0$ as above, in order to construct
$h$ for which \eqref{eq:transvgeo} holds, one first observe that the set of instants $\theta\in\mathds S^1$
at which $\dot\gamma_0(\theta)$ is multiple of $J_0(\theta)$ is finite. Moreover, since $\gamma_0$ is prime,
the $\gamma_0$ has only a finite number of self-intersections. Thus, there exists an open
connected subset $I\subset\mathds S^1$ and an open subset $U$ of $M$ such that:
\begin{itemize}
\item $\gamma_0:I\to M$ is injective;
\item $\gamma_0(\theta)\in U$ if and only if $\theta\in I$;
\item $\dot\gamma_0(\theta)$ and $J_0(\theta)$ are linearly independent for all $\theta\in I$.
\end{itemize}
By \cite[Lemma~2.4]{BilJavPic}, given a sufficiently small subinterval $I_0\subset I$, there exists a symmetric
$(0,2)$-tensor $h$ of class $C^k$ on $M$ that has compact support contained in $U$ and that has arbitrarily prescribed
value and covariant derivative in the direction $J_0$ along $\gamma_0\vert_{I_0}$.
In particular, one can choose $h$ vanishing identically along $\gamma_0$, and $\nabla_{J_0(\theta)}h=K_\theta$
a symmetric bilinear form on $T_{\gamma_0(\theta)}M$ vanishing for $\theta$ outside $I_0$ and such that:
\[\int_{I_0}K_\theta\big(\dot\gamma_0(\theta),\dot\gamma_0(\theta)\big)\,\mathrm d\theta\ne0.\]
For such $h$ condition \eqref{eq:transvgeo} holds, and the proof is concluded.
\end{proof}
\subsection{Iterates}
\label{sub:iterate}
We will now study the question of nondegeneracy for iterates and prove the strong version of
the semi-Riemannian bumpy metric theorem. We will follow closely Anosov's approach
to iterates as discussed in \cite{Ano}, with suitable modifications.

Let us fix an auxiliary Riemannian metric $\mathbf g_{\mathrm R}$ on $M$. Given a curve $\gamma\in\Lambda$,
the quantity
\[\mathrm E(\gamma)=\tfrac12\int_{\mathds S^1}\mathbf g_{\mathrm R}(\dot\gamma,\dot\gamma)\,\mathrm d\theta\]
will denote the \emph{total energy} of $\gamma$, and the quantity:
\[\mathrm E_{\mathrm{min}}(\gamma)=\mathrm E(\gamma)\cdot\big\vert\mathrm{stab}(\gamma)\big\vert^{-1},\]
where $\big\vert\mathrm{stab}(\gamma)\big\vert$ is the cardinality of the stabilizer of $\gamma$, will
denote the \emph{minimal energy} of $\gamma$. This is the total energy of the (unique) prime geodesic
of whom $\gamma$ is an iterate; clearly, if $\gamma$ is prime, then $\mathrm E(\gamma)=
\mathrm E_{\mathrm{min}}(\gamma)$. Let us also introduce the following notation: given
real numbers $0<a\le b<+\infty$, set:
\begin{multline*}
\mathcal M(a,b)=\Big\{\mathbf g\in\mathrm{Met}(M,i;k):\text{every closed $\mathbf g$-geodesic $\gamma$}\\
\text{with $\mathrm E_{\mathrm{min}}(\gamma)\le a$ and $\mathrm E(\gamma)\le b$ is nondegenerate}\Big\}.
\end{multline*}
Finally, let us denote by $\mathcal M^\star$ the set:
\[\mathcal M^\star=\Big\{\mathbf g\in\mathrm{Met}(M,i;k):\text{every closed prime $\mathbf g$-geodesic $\gamma$
is nondegenerate}\Big\};\]
by Proposition~\ref{thm:weakbumpy}, $\mathcal M^\star$ is generic in $\mathrm{Met}(M,i;k)$.

Our proof of the semi-Riemannian bumpy metric theorem will be obtained by showing that
for all natural numbers $n\ge1$, the set $\mathcal M(n,n)$ is open and dense in $\mathrm{Met}(M,i;k)$;
note that the set of bumpy metrics coincides with the countable intersection:
\[\bigcap_{n\ge1}\mathcal M(n,n).\]
Recall that $\mathrm{Met}(M,i;k)$ is an open subset of a Banach space, thus it is a Baire space, i.e.,
the intersection of a countable family of dense open subsets is dense.
\smallskip

First, we observe that given positive real numbers $a\le a'$ and $b\le b'$, with $a\le b$ and $a'\le b'$,
then: 
\[\mathcal M(a',b')\subset\mathcal M(a,b).\]

\begin{lem}\label{thm:lemit2}
Given $\mathbf g_0\in\mathrm{Met}(M,i;k)$, then there exists $\overline R>0$ and a neighborhood
$\mathcal U_0$ of $\mathbf g_0$ in $\mathrm{Met}(M,i;k)$ such that, for every $\mathbf g\in\mathcal U_0$,
no nonconstant closed $\mathbf g$-geodesic has image contained in a ball of $\mathbf g_{\mathrm R}$-radius
less than or equal to $\overline R$. In particular, there exists $\overline a>0$ such that
for all $\mathbf g\in\mathcal U_0$ and all prime closed $\mathbf g$-geodesic $\gamma$, it is
$\mathrm E(\gamma)\ge\overline a$.
\end{lem}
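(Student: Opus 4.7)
The plan is to show that in sufficiently small geodesic balls, uniformly in the basepoint and in the metric, the only closed $\mathbf g$-geodesic is constant, by exploiting the fact that radial lines in $\mathbf g$-normal coordinates are geodesics---a property insensitive to the signature of $\mathbf g$.

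First, I would use the inverse function theorem together with the joint continuity of the semi-Riemannian exponential map $(p,\mathbf g)\mapsto\exp_p^{\mathbf g}$ in base point and metric, combined with compactness of $M$, to find a radius $r>0$ and a $C^k$-neighborhood $\mathcal U_0$ of $\mathbf g_0$ such that, for every $(p,\mathbf g)\in M\times\mathcal U_0$, $\exp_p^{\mathbf g}$ is a diffeomorphism from the open $\mathbf g_{\mathrm R}$-ball $B_r(0)\subset T_pM$ onto an open subset of $M$; shrinking further, one may in addition arrange $\exp_p^{\mathbf g}(B_r(0))\supset B_{2\overline R}^{\mathbf g_{\mathrm R}}(p)$ for some uniform $\overline R>0$. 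Concretely, one covers $M$ by finitely many local neighborhoods produced by the inverse function theorem applied at pairs $(p_j,\mathbf g_0)$, intersects the resulting $C^k$-neighborhoods of $\mathbf g_0$, and takes the minimum radius.

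Next, assume for contradiction that some $\mathbf g\in\mathcal U_0$ admits a nonconstant closed $\mathbf g$-geodesic $\gamma:\mathds S^1\to M$ whose image lies in a $\mathbf g_{\mathrm R}$-ball of radius at most $\overline R$. Setting $p=\gamma(0)$, the triangle inequality places the image inside $B_{2\overline R}^{\mathbf g_{\mathrm R}}(p)\subset\exp_p^{\mathbf g}(B_r(0))$, so the lift $\tilde\gamma=(\exp_p^{\mathbf g})^{-1}\circ\gamma:\mathds S^1\to B_r(0)$ is a well-defined continuous closed curve with $\tilde\gamma(0)=0$ and $\tilde\gamma'(0)=v:=\dot\gamma(0)\ne 0$. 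Since $\tilde\gamma$ and the straight line $\theta\mapsto\theta v$ are both geodesics of the pulled-back metric on $B_r(0)$ with the same initial data, uniqueness of ODE solutions forces $\tilde\gamma(\theta)=\theta v$ on $[0,\min(1,T))$, where $T=r/|v|_{\mathbf g_{\mathrm R}}$. If $T>1$, continuity gives $\tilde\gamma(1)=v$, but periodicity gives $\tilde\gamma(1)=0$, so $v=0$: contradiction. If $T\le 1$, continuity yields $\tilde\gamma(T)=Tv\in\partial B_r(0)$, contradicting that $\tilde\gamma$ takes values in the open set $B_r(0)$.

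For the additional statement, a nonconstant closed curve whose image does not fit inside any $\mathbf g_{\mathrm R}$-ball of radius $\overline R$ has $\mathbf g_{\mathrm R}$-diameter at least $\overline R$, so its $\mathbf g_{\mathrm R}$-length satisfies $\ell(\gamma)\ge 2\overline R$ (a closed curve joins any two of its points by two arcs). Cauchy--Schwarz applied on $\mathds S^1$ (of total measure one) then gives $\mathrm E(\gamma)\ge\tfrac12\ell(\gamma)^2\ge 2\overline R^2$, so one may take $\overline a=2\overline R^2$; the bound in fact holds for every nonconstant closed $\mathbf g$-geodesic, and in particular for prime ones. The only nonroutine point is the uniformity of the normal-coordinate radius over $M\times\mathcal U_0$, which is handled by joint continuity of the exponential map and compactness of $M$; the remainder of the argument reduces to ODE uniqueness and the defining property of normal coordinates.
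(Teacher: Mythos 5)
Your proof is correct, and it follows the same overall strategy as the paper's: use compactness of $M$ together with continuous dependence of the exponential map on $(p,\mathbf g)$ to produce a uniform small scale at which no nonconstant closed $\mathbf g$-geodesic can live. The difference is in how the local non-existence fact is established. The paper invokes $\mathbf g$-convex neighborhoods (citing O'Neill for their existence and for the $C^2$-continuous dependence of their size on $\mathbf g$) and simply asserts that such a neighborhood contains no nontrivial closed geodesic, finishing with a Lebesgue-number argument. You instead work directly with normal (not convex) neighborhoods, and compensate for the loss of convexity by the factor-of-two trick: a geodesic in a ball of $\mathbf g_{\mathrm R}$-radius $\overline R$ around some point $q$ lies in $B_{2\overline R}^{\mathbf g_{\mathrm R}}(\gamma(0))$, which you arrange to be inside a normal chart of $\exp_{\gamma(0)}^{\mathbf g}$; lifting to $T_{\gamma(0)}M$ makes the curve a radial segment, incompatible with periodicity. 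This gives a self-contained derivation of the fact the paper leaves implicit, and it is, as you note, signature-independent. You also make explicit the energy bound $\overline a=2\overline R^{\,2}$ via Cauchy--Schwarz, which the paper only asserts with ``in particular.'' The one point I would flag as deserving a sentence more of care is the claim that $\exp_p^{\mathbf g}$ is an injective diffeomorphism (not just a local diffeomorphism) on $B_r(0)$ uniformly in $(p,\mathbf g)$ over compact parameter sets and a $C^2$-neighborhood of the metric: this is true, but rests on a quantitative inverse function theorem with parameters (injectivity controlled by a bound on $\|\mathrm d\exp_p^{\mathbf g}(v)-\mathrm{Id}\|$ over $B_r(0)$), not merely on the pointwise inverse function theorem, and your phrase ``finitely many local neighborhoods produced by the inverse function theorem'' glosses over exactly this.
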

\begin{proof}
Given any $p\in M$, there exists an open neighborhood $U_p$ of $p$ in $M$ and
an open neighborhood $\mathcal U_0^p$ of $\mathbf g_0$ in $\mathrm{Met}(M,i;k)$ such that,
for all $\mathbf g\in\mathcal U_0^p$, $U_p$ is contained in a $\mathbf g$-convex neighborhood\footnote{%
A neighborhood of $p$ is a normal neighborhood relatively to $\mathbf g$
if it is the diffeomorphic image through the exponential map of $\mathbf g$ of some star-shaped
open neighborhood of $0$ in $T_pM$. A neighborhood of $p$ is convex if it is a normal neighborhood of
all of its points. Convex neighborhoods of a given point exist for every semi-Riemannian metric $\mathbf g$, and their
size depends continuously on $\mathbf g$ relatively to the $C^2$-topology, see \cite{One83}.}
of $p$.
By compactness, $M$ is covered by a finite union $U_{p_1}\cup\ldots\cup U_{p_N}$ of such
open subsets; let $\overline R$ be the Lebesgue number of this open cover
relatively to the metric induced by $\mathbf g_{\mathrm R}$.
Then, every ball of $\mathbf g_{\mathrm R}$-radius less than or equal to $\overline R$ is
contained in some $U_{p_i}$, and thus it cannot contain any non trivial
closed $\mathbf g$-geodesic for any $\mathbf g\in\mathcal U_0=\bigcap_{i=1}^N\mathcal U_0^{p_i}$.
This concludes the proof.
\end{proof}
\begin{lem}\label{thm:lemaconver}
 Let $\{\mathbf g_n\}$ be a sequence of metrics in $\mathrm{Met}(M,i;k)$ converging to $\mathbf g_\infty\in\mathrm{Met}(M,i;k)$ and let $\{\gamma_n\}$ be  curves in $M$ such that for every $n\in\N$, $\gamma_n$ is a degenerate geodesic of $\mathbf g_n$  and there exists a positive number $b$ such that, $E(\gamma_n)\leq b$. Then there exists a subsequence of $\{\gamma_n\}$ that converges to a non constant degenerate geodesic $\gamma_\infty$ of ${\mathbf g}_\infty$.
\end{lem}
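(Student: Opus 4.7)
The plan is to obtain uniform $C^k$-estimates on the curves $\gamma_n$, extract a subsequence converging in $C^{k-1}$ to some $\gamma_\infty$, identify $\gamma_\infty$ as a nonconstant $\mathbf g_\infty$-geodesic via Lemma~\ref{thm:lemit2}, and then produce a nontrivial periodic Jacobi field along $\gamma_\infty$ by passing to the limit in a suitably normalized sequence of nontrivial periodic Jacobi fields $J_n$ along $\gamma_n$.

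The core analytic step will be a uniform $L^\infty(\mathbf g_{\mathrm R})$-bound on $\dot\gamma_n$. I will set $u_n=\mathbf g_{\mathrm R}(\dot\gamma_n,\dot\gamma_n)$ and rewrite the $\mathbf g_n$-geodesic equation as $\nabla^{\mathbf g_{\mathrm R}}_{\dot\gamma_n}\dot\gamma_n = S_n(\dot\gamma_n,\dot\gamma_n)$, where $S_n$ is the tensor given by the difference of the Levi--Civita connections of $\mathbf g_{\mathrm R}$ and $\mathbf g_n$; since $\mathbf g_n\to\mathbf g_\infty$ in $C^k$ on the compact manifold $M$, the quantity $\|S_n\|_{C^0(M)}$ is uniformly bounded, and a direct computation yields $|u_n'|\le C u_n^{3/2}$ for some uniform $C$, so $u_n^{-1/2}$ is $(C/2)$-Lipschitz on $\mathds S^1$. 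Writing $M_n:=\sup u_n$ at $\theta^\star_n$, this forces the pointwise estimate $u_n(\theta)\ge (M_n^{-1/2}+(C/2)\,d(\theta,\theta^\star_n))^{-2}$, whose $\mathds S^1$-integral diverges as $M_n\to\infty$; combined with $\int u_n\le 2b$ this gives a uniform upper bound on $M_n$. The geodesic equation then yields uniform $C^{1,1}$-bounds on $\gamma_n$, and a standard bootstrap upgrades these to uniform $C^k$-bounds. By Arzel\`a--Ascoli a subsequence converges in $C^{k-1}$ to some $\gamma_\infty$, and passing to the limit in the geodesic equation shows that $\gamma_\infty$ is a closed $\mathbf g_\infty$-geodesic. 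To exclude $\gamma_\infty$ being constant I will apply Lemma~\ref{thm:lemit2} to $\mathbf g_\infty$: for all sufficiently large $n$, $\mathbf g_n\in\mathcal U_0$, so the image of the nonconstant closed $\mathbf g_n$-geodesic $\gamma_n$ has $\mathbf g_{\mathrm R}$-diameter strictly greater than $\overline R$, a bound which survives the uniform limit.

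For the degeneracy of $\gamma_\infty$, for each $n$ I will pick a periodic $\mathbf g_n$-Jacobi field $J_n$ along $\gamma_n$ that is $L^2(\mathbf g_{\mathrm R})$-orthogonal to $\dot\gamma_n$; this exists and is nonzero because $\gamma_n$ is degenerate (project any Jacobi field not parallel to $\dot\gamma_n$ onto the $L^2$-orthogonal complement of $\dot\gamma_n$). I will normalize $J_n$ by $|J_n(0)|_{\mathbf g_{\mathrm R}}^2+|\dot J_n(0)|_{\mathbf g_{\mathrm R}}^2=1$, which is possible since ODE uniqueness for the linear Jacobi equation prevents this initial datum from vanishing on a nonzero $J_n$. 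The Jacobi equation along $\gamma_n$ for $\mathbf g_n$ is a linear second-order ODE whose coefficients are uniformly $C^0$-bounded by the previous step, so continuous dependence on data and initial conditions yields uniform $C^2$-bounds on $J_n$, and a further subsequence converges in $C^1$ to a periodic $\mathbf g_\infty$-Jacobi field $J_\infty$ along $\gamma_\infty$ whose unit normalized initial data is inherited, so $J_\infty\not\equiv 0$. The $L^2(\mathbf g_{\mathrm R})$-orthogonality to $\dot\gamma_n$ passes to the limit, and since $\gamma_\infty$ is nonconstant we have $\int_{\mathds S^1}\mathbf g_{\mathrm R}(\dot\gamma_\infty,\dot\gamma_\infty)\,\mathrm d\theta>0$; hence the only constant multiple of $\dot\gamma_\infty$ which is $L^2(\mathbf g_{\mathrm R})$-orthogonal to $\dot\gamma_\infty$ is zero, so $J_\infty$ is not a constant multiple of $\dot\gamma_\infty$ and $\gamma_\infty$ is degenerate.

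The main obstacle is the uniform $L^\infty$-bound on $\dot\gamma_n$: in the Riemannian case it is trivial from constancy of the $\mathbf g$-speed, but for indefinite metrics the invariant $\mathbf g_n(\dot\gamma_n,\dot\gamma_n)$ gives no pointwise $\mathbf g_{\mathrm R}$-control and in fact vanishes along lightlike geodesics, so one must combine the geodesic differential inequality for $u_n$ with the integral energy bound to close the estimate.
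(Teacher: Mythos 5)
Your proof is correct and follows the same overall scheme as the paper (extract a $C^2$-convergent subsequence, rule out the constant limit via Lemma~\ref{thm:lemit2}, and produce a non-tangential periodic Jacobi field in the limit by passing to the limit in a normalized sequence of Jacobi fields along $\gamma_n$), but the compactness step is handled in a genuinely different and, in fact, more careful way. The paper only observes that the energy bound yields a single instant $t_n$ at which $\Vert\dot\gamma_n(t_n)\Vert$ is bounded, and then invokes ``continuous dependence results on ODE's'' to conclude that the $\gamma_n$ converge in $C^2$ on all of $\mathds S^1$ to the $\mathbf g_\infty$-geodesic with the limiting initial data. Taken literally this is a hand-wave: in the semi-Riemannian setting the $\mathbf g$-speed gives no pointwise $\mathbf g_{\mathrm R}$-control, the geodesic equation is quadratic in the velocity, and continuous dependence only gives convergence on the interval where the limit solution is known to exist with its velocity under control. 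Your estimate $\vert u_n'\vert\le Cu_n^{3/2}$ (coming from the difference tensor $S_n$ between the two Levi--Civita connections), which makes $u_n^{-1/2}$ uniformly Lipschitz, combined with $\int_{\mathds S^1}u_n\le 2b$, is precisely the missing ingredient that yields the global $L^\infty$ bound on $\dot\gamma_n$ and hence justifies convergence on the whole circle. In short, you supply the a priori velocity estimate that the paper leaves implicit, and this is the most substantive point of the lemma in the indefinite setting. The second half of your argument (choosing $J_n$ $L^2(\mathbf g_{\mathrm R})$-orthogonal to $\dot\gamma_n$ and normalizing $\vert J_n(0)\vert^2+\vert J_n'(0)\vert^2=1$) differs only cosmetically from the paper, which instead arranges pointwise $\mathbf g_{\mathrm R}$-orthogonality of $J_n(0)$ to $\dot\gamma_n(0)$ and normalizes by $\max\{\Vert J_n(0)\Vert,\Vert J_n'(0)\Vert\}=1$; both normalizations exclude the tangential limit in the same way.
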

\begin{proof}
Since $\mathrm E(\gamma_n)\le b$, then there exists $t_n\in[0,1]$ such that
$\Vert\dot\gamma_n(t_n)\Vert\le b$ for all $n$; up to subsequences, we can assume that $t_n\to t_\infty\in[0,1]$
and that $\dot\gamma_n(t_n)\to v\in ¨T_{p_\infty}M$ as $n\to\infty$, with $p_\infty=\lim_{n\to\infty}\gamma_n(t_\infty)$.
Now, by continuous dependence results on ODE's, the solution $\gamma_\infty$ of the initial value problem
$\frac{\mathrm D^\infty}{\mathrm dt}\dot\gamma=0$, $\gamma(t_\infty)=p_\infty$, $\dot\gamma(t_\infty)=v$,
where $\mathrm D^\infty$ is the covariant derivative of the Levi--Civita connection of $\mathbf g_\infty$,
is $C^2$-limit of the sequence $\gamma_n$ and of course $\gamma_\infty$ is a $\mathbf g_\infty$-closed geodesic with $\mathrm E(\gamma_\infty)\le b$.
Moreover, $\gamma_\infty$ is not constant; namely, if it were, there would be nontrivial closed geodesics
relatively to metrics arbitrarily near $\mathbf g_\infty$ whose images lie in balls of
$\mathbf g_{\mathrm R}$-radius arbitrary small, which contradicts Lemma \ref{thm:lemit2}.

Finally, $\gamma_\infty$ is degenerate. In order to prove the assertion, it suffices to use a continuous dependence
argument for the Jacobi equation along $\gamma_n$. More precisely, let $J_n$ be a periodic Jacobi field along
$\gamma_n$ which is not a multiple of the tangent field $\dot\gamma_n$. By adding to $J_n$ a suitable multiple
of $\dot\gamma_n$, one can assume that $J_n(0)$ is $\mathbf g_{\mathrm R}$-orthogonal to
$\dot\gamma_n(0)$. Moreover, after a suitable normalization, one can assume that:
\[\max\big\{\Vert J_n(0)\Vert,\ \Vert J_n'(0)\Vert\big\}=1;\]
here $J_n'$ denotes the covariant derivative of $J_n$ along $\gamma_n$ relative to the Levi--Civita connection
of $\mathbf g_n$. Then, up to subsequences there exist the limits $\lim\limits_{n\to\infty}J_n(0)=v\in T_{\gamma_\infty(0)}M$
and $\lim\limits_{n\to\infty}J_n'(0)=w\in T_{\gamma_\infty(0)}M$; by continuity
$v$ is $\mathbf g_{\mathrm R}$-orthogonal to $\dot\gamma_\infty(0)$ and
\begin{equation}\label{eq:maxvw}
\max\big\{\Vert v\Vert,\ \Vert w\Vert\big\}=1.
\end{equation}
The Jacobi field $J_\infty$ along $\gamma_\infty$ satisfying the initial conditions $J_\infty(0)=v$ and
$J_\infty'(0)=w$ is uniform (in fact, $C^2$) limit of the Jacobi fields $J_n$, and thus it is periodic.
It is not a multiple of the tangent field $\dot\gamma_\infty$. Namely, if it were, since $v$ is
$\mathbf g_{\mathrm R}$-orthogonal to $\dot\gamma_\infty(0)$, then it would be $v=0$. Moreover, if $J_n$
were a multiple of $\dot\gamma_\infty$, then it would be $w=0$. This contradicts \eqref{eq:maxvw},
and proves that $\gamma_\infty$ is degenerate.
\end{proof}

\begin{lem}\label{thm:lemit1}
For all $0<a\leq b$, $\mathcal M(a,b)$ is open in $\mathrm{Met}(M,i;k)$.
\end{lem}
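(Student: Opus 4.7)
The plan is to show that the complement of $\mathcal M(a,b)$ in $\mathrm{Met}(M,i;k)$ is sequentially closed. I start with a sequence $\mathbf g_n\to\mathbf g_\infty$ of metrics outside $\mathcal M(a,b)$ and, for each $n$, a degenerate closed $\mathbf g_n$-geodesic $\gamma_n$ satisfying $\mathrm E_{\mathrm{min}}(\gamma_n)\le a$ and $\mathrm E(\gamma_n)\le b$; the goal is to produce a limiting degenerate closed $\mathbf g_\infty$-geodesic $\gamma_\infty$ meeting the same bounds, which will witness $\mathbf g_\infty\notin\mathcal M(a,b)$.

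First, I invoke Lemma~\ref{thm:lemit2} at $\mathbf g_\infty$, obtaining a neighborhood $\mathcal U_0\ni\mathbf g_\infty$ and $\overline a>0$ such that every prime closed geodesic of every metric in $\mathcal U_0$ has energy at least $\overline a$. Since $\mathrm E_{\mathrm{min}}(\gamma_n)$ coincides with the total energy of the prime geodesic underlying $\gamma_n$, for $n$ large one has $\mathrm E_{\mathrm{min}}(\gamma_n)\ge\overline a$, so the iteration multiplicity
\[
m_n=\big|\mathrm{stab}(\gamma_n)\big|=\frac{\mathrm E(\gamma_n)}{\mathrm E_{\mathrm{min}}(\gamma_n)}\le\frac{b}{\overline a}
\]
is uniformly bounded. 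Passing to a subsequence, I may assume $m_n\equiv m$ for some fixed positive integer $m$, so that each $\gamma_n$ satisfies the periodicity relation $\gamma_n(\theta+2\pi/m)=\gamma_n(\theta)$ for all $\theta\in\mathds S^1$.

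Second, applying Lemma~\ref{thm:lemaconver} to a further subsequence, I obtain $C^2$-convergence $\gamma_n\to\gamma_\infty$ to a non-constant degenerate closed $\mathbf g_\infty$-geodesic, with $\mathrm E(\gamma_\infty)=\lim_n\mathrm E(\gamma_n)\le b$. Uniform convergence preserves the periodicity relation, giving $\gamma_\infty(\theta+2\pi/m)=\gamma_\infty(\theta)$ and hence $|\mathrm{stab}(\gamma_\infty)|\ge m$. Consequently
\[
\mathrm E_{\mathrm{min}}(\gamma_\infty)=\frac{\mathrm E(\gamma_\infty)}{|\mathrm{stab}(\gamma_\infty)|}\le\frac{\mathrm E(\gamma_\infty)}{m}=\lim_{n\to\infty}\frac{\mathrm E(\gamma_n)}{m}=\lim_{n\to\infty}\mathrm E_{\mathrm{min}}(\gamma_n)\le a,
\]
so $\mathbf g_\infty\notin\mathcal M(a,b)$, completing the argument.

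The delicate step I expect to be the main obstacle is controlling the iteration multiplicities $m_n$: if they were allowed to tend to infinity, the periodicity condition would degenerate in the limit and we would lose all control on $\mathrm E_{\mathrm{min}}(\gamma_\infty)$, so $\gamma_\infty$ could end up being a prime geodesic of arbitrarily large minimal energy. Lemma~\ref{thm:lemit2} is precisely what rules this out, by placing a uniform positive lower bound on the energy of prime closed geodesics for metrics near $\mathbf g_\infty$, and it is the technical heart of the argument.
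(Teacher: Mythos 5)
Your argument is correct and follows essentially the same route as the paper's proof: close the complement by a sequential argument, use Lemma~\ref{thm:lemaconver} to extract a $C^2$-convergent subsequence $\gamma_n\to\gamma_\infty$ with $\gamma_\infty$ a nonconstant degenerate closed $\mathbf g_\infty$-geodesic of energy at most $b$, use Lemma~\ref{thm:lemit2} to bound $|\mathrm{stab}(\gamma_n)|$ so that (after passing to a subsequence) it is a constant $m$, and pass the periodicity relation to the limit to get $|\mathrm{stab}(\gamma_\infty)|\ge m$ and hence $\mathrm E_{\mathrm{min}}(\gamma_\infty)\le a$. The only difference is a harmless reordering (you bound the stabilizers before invoking the convergence lemma, whereas the paper extracts the limit geodesic first); your closing remark correctly identifies Lemma~\ref{thm:lemit2} as the step that prevents the multiplicities from running off to infinity.
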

\begin{proof}
Let us show that the complementary $\mathrm{Met}(M,i;k)\setminus\mathcal M(a,b)$ is closed.
Assume that $\mathbf g_n$ is a sequence in $\mathrm{Met}(M,i;k)\setminus\mathcal M(a,b)$
that converges to some $\mathbf g_\infty$ in $\mathrm{Met}(M,i;k)$. By definition, every $\mathbf g_n$
has a non trivial degenerate closed geodesic $\gamma_n$ with $\mathrm E(\gamma_n)\le b$ and
$\mathrm E_{\mathrm{min}}(\gamma_n)\le a$. By Lemma ~\ref{thm:lemaconver}, there exists a subsequence of $\gamma_n$ that converges to a non constant $\mathbf g_\infty$-degenerate closed geodesic $\gamma_\infty$.
By Lemma~\ref{thm:lemit2}, there exists $\overline a>0$
such that, for $n$ sufficiently large, $\mathrm E(\gamma_n)\big\vert\mathrm{stab}(\gamma_n)\big\vert^{-1}\ge\overline a$;
namely, $\mathrm E(\gamma_n)\big\vert\mathrm{stab}(\gamma_n)\big\vert^{-1}$ is the total energy of a nontrivial
prime closed
geodesic relatively to a metric near $\mathbf g_\infty$. This implies, first, that $\big\vert\mathrm{stab}(\gamma_n)\big\vert$
is bounded, so that, up to passing to subsequences, we can assume $N=\big\vert\mathrm{stab}(\gamma_n)\big\vert$ constant.
Second, by pointwise convergence, $\big\vert\mathrm{stab}(\gamma_\infty)\big\vert\ge N$ (take the limit
in the expression $\gamma_n(t+1/N)=\gamma_n(t)$). Thus, $\mathrm E_{\mathrm{min}}(\gamma_\infty)\le a$.
Hence, $\mathbf g_\infty\in\mathrm{Met}(M,i;k)\setminus\mathcal M(a,b)$,
and the proof is concluded.
\end{proof}
\begin{lem}\label{thm:lemit3}
For all $a>0$, $\mathcal M^\star\cap\mathcal M(a,2a)\subset\mathcal M(2a,2a)$.
\end{lem}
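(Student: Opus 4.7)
The plan is to take a metric $\mathbf g\in\mathcal M^\star\cap\mathcal M(a,2a)$ and an arbitrary closed $\mathbf g$-geodesic $\gamma$ satisfying $\mathrm E_{\mathrm{min}}(\gamma)\le 2a$ and $\mathrm E(\gamma)\le 2a$, and to show that $\gamma$ must be nondegenerate. The argument reduces to a simple case distinction based on whether $\gamma$ is prime or an iterate, so there is no real obstacle here; this lemma is essentially a book-keeping step that feeds into the iterative doubling scheme used later.

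First I would dispose of the case when $\gamma$ is prime. In that case $\mathrm E_{\mathrm{min}}(\gamma)=\mathrm E(\gamma)$ and, since $\mathbf g\in\mathcal M^\star$, every closed prime $\mathbf g$-geodesic is nondegenerate by definition, so $\gamma$ is nondegenerate.

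Next I would handle the case when $\gamma$ is not prime, so that $N=\bigl|\mathrm{stab}(\gamma)\bigr|\ge 2$. By the very definition of the minimal energy,
\[
\mathrm E_{\mathrm{min}}(\gamma)=\mathrm E(\gamma)\cdot N^{-1}\le\tfrac{2a}{N}\le a,
\]
while at the same time $\mathrm E(\gamma)\le 2a$ by hypothesis. Hence $\gamma$ falls within the class of geodesics whose nondegeneracy is guaranteed by membership of $\mathbf g$ in $\mathcal M(a,2a)$, and so $\gamma$ is nondegenerate.

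Combining the two cases gives $\mathbf g\in\mathcal M(2a,2a)$, which is exactly the claimed inclusion. The only subtle point worth emphasizing is the factor of $2$: it is precisely the identity $\mathrm E_{\mathrm{min}}(\gamma)=\mathrm E(\gamma)/N$ together with $N\ge 2$ for non-prime curves that makes the bound $2a$ on $\mathrm E_{\mathrm{min}}$ collapse to the bound $a$ needed to invoke $\mathcal M(a,2a)$. This is the mechanism that, applied iteratively in the subsequent lemmas, will allow one to pass from $\mathcal M(a,2a)$ to $\mathcal M(n,n)$ for arbitrarily large $n$.
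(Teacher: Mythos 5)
Your proof is correct and follows essentially the same two-case argument as the paper: prime geodesics are handled by membership in $\mathcal M^\star$, and iterates (where $|\mathrm{stab}(\gamma)|\ge 2$ forces $\mathrm E_{\mathrm{min}}(\gamma)\le a$) are handled by membership in $\mathcal M(a,2a)$. No gaps; your extra commentary on the role of the factor $2$ is accurate but not needed for the proof itself.
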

\begin{proof}
Choose $\mathbf g\in\mathcal M^\star\cap\mathcal M(a,2a)$ and let $\gamma$ be a closed
$\mathbf g$-geodesic with $\mathrm E(\gamma)\le 2a$. If $\gamma$ is prime, then it is nondegenerate,
because $\mathbf g\in\mathcal M^\star$. If $\big\vert\mathrm{stab}(\gamma)\big\vert\ge2$, then
$\mathrm E_{\mathrm{min}}(\gamma)\le a$, and thus $\gamma$ is nondegenerate, because
$\mathbf g\in\mathcal M(a,2a)$.
\end{proof}
\begin{lem}\label{thm:lemit4}
For all $a>0$, $\mathcal M\big(\frac32a,\frac32a\big)\cap\mathcal M(a,2a)$ is dense in $\mathcal M(a,2a)$.
\end{lem}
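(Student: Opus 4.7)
The plan is to obtain this density essentially for free by combining Lemma~\ref{thm:lemit3} with the $C^k$-genericity of $\mathcal M^\star$ already established in Proposition~\ref{thm:weakbumpy}. The key observation is that the set $\mathcal M^\star\cap\mathcal M(a,2a)$, of metrics in $\mathcal M(a,2a)$ whose prime closed geodesics are all nondegenerate, is itself dense in $\mathcal M(a,2a)$; I then plan to show that this set is already contained in $\mathcal M\bigl(\frac32a,\frac32a\bigr)\cap\mathcal M(a,2a)$, at which point the statement is immediate.

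First I would note that $\mathrm{Met}(M,i;k)$ is an open subset of a Banach space and therefore a Baire space, so the genericity of $\mathcal M^\star$ provided by Proposition~\ref{thm:weakbumpy} yields density. Since $\mathcal M(a,2a)$ is open in $\mathrm{Met}(M,i;k)$ by Lemma~\ref{thm:lemit1}, intersecting with it preserves density in the subspace topology; hence $\mathcal M^\star\cap\mathcal M(a,2a)$ is dense in $\mathcal M(a,2a)$.

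Next I would apply Lemma~\ref{thm:lemit3} to get the inclusion $\mathcal M^\star\cap\mathcal M(a,2a)\subset\mathcal M(2a,2a)$. From the monotonicity property recorded just after Lemma~\ref{thm:lemit1}, namely that $\mathcal M(a',b')\subset\mathcal M(a,b)$ whenever $a\le a'$ and $b\le b'$, one has $\mathcal M(2a,2a)\subset\mathcal M\bigl(\frac32a,\frac32a\bigr)$. Combining these two inclusions gives
\[
\mathcal M^\star\cap\mathcal M(a,2a)\subset\mathcal M\bigl(\tfrac32a,\tfrac32a\bigr)\cap\mathcal M(a,2a),
\]
and since the smaller set is dense in $\mathcal M(a,2a)$, so is the larger one. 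This completes the argument.

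There is no real obstacle here: all the nontrivial work has been invested in Proposition~\ref{thm:weakbumpy} (genericity of prime nondegeneracy), Lemma~\ref{thm:lemit1} (openness of $\mathcal M(a,b)$) and Lemma~\ref{thm:lemit3} (the inclusion controlling iterates in the range $\mathrm E_{\min}\le a$). The only point worth double-checking is the numerology behind the monotonicity step, which works precisely because $\tfrac32a\le 2a$ on both arguments, ensuring that the strictly stronger nondegeneracy condition defining $\mathcal M(2a,2a)$ implies the one defining $\mathcal M\bigl(\tfrac32a,\tfrac32a\bigr)$.
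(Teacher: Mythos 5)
Your proof is correct and arrives at the conclusion by the same fundamental route as the paper: establishing that the dense subset $\mathcal M^\star\cap\mathcal M(a,2a)$ is contained in $\mathcal M\bigl(\tfrac32a,\tfrac32a\bigr)\cap\mathcal M(a,2a)$, and noting that a superset of a dense subset of an open set is dense there. The only real difference is in how the inclusion into $\mathcal M\bigl(\tfrac32a,\tfrac32a\bigr)$ is obtained: the paper re-runs the case analysis from scratch (for $\mathrm E(\gamma)\le\tfrac32a$, split on prime versus $\lvert\mathrm{stab}(\gamma)\rvert\ge2$, using $\mathrm E_{\min}(\gamma)\le\tfrac34a<a$ in the latter case), whereas you chain Lemma~\ref{thm:lemit3} (giving inclusion into $\mathcal M(2a,2a)$) with the monotonicity $\mathcal M(2a,2a)\subset\mathcal M\bigl(\tfrac32a,\tfrac32a\bigr)$. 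Your variant is slightly more economical and actually puts Lemma~\ref{thm:lemit3} to use, which the paper's own proof of this lemma does not.
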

\begin{proof}
Let us show that $\mathcal M^\star\cap\mathcal M(a,2a)$ is contained in
$\mathcal M\big(\frac32a,\frac32a\big)\cap\mathcal M(a,2a)$.
The thesis will then follow from Proposition~\ref{thm:weakbumpy}, which guarantees that
$\mathcal M^\star\cap\mathcal M(a,2a)$ is dense in $\mathcal M(a,2a)$.
Choose $\mathbf g\in\mathcal M^\star\cap\mathcal M(a,2a)$ and let $\gamma$ be a closed
$\mathbf g$-geodesic such that $\mathrm E(\gamma)\le\frac32a$. If $\gamma$ is prime, then
it is nondegenerate because $\mathbf g\in\mathcal M^\star$. If $\big\vert\mathrm{stab}(\gamma)\big\vert\ge2$, then
$\mathrm E_{\mathrm{min}}(\gamma)\le\frac34 a<a$, and thus $\gamma$ is nondegenerate, because
$\mathbf g\in\mathcal M(a,2a)$.
\end{proof}
\begin{lem}\label{thm:lemit5}
For all $a>0$, $\mathcal M(a,2a)$ is dense in $\mathcal M(a,a)$.
\end{lem}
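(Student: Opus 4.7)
The plan is, given $\mathbf g_0 \in \mathcal M(a,a)$ and a $C^k$-neighborhood $\mathcal U$ of $\mathbf g_0$, to exhibit $\mathbf g \in \mathcal U \cap \mathcal M(a,2a)$. By Lemma~\ref{thm:lemit1} we may shrink $\mathcal U$ so that $\mathcal U \subset \mathcal M(a,a)$. The geodesics that distinguish $\mathcal M(a,2a)$ from $\mathcal M(a,a)$ are those $\gamma$ with $\mathrm E_{\mathrm{min}}(\gamma) \le a$ and $a < \mathrm E(\gamma) \le 2a$; any such $\gamma$ must have $|\mathrm{stab}(\gamma)| \ge 2$, hence is an $n$-fold iterate $\tilde\gamma^n$ (with $n \ge 2$) of a prime closed geodesic $\tilde\gamma$ with $\mathrm E(\tilde\gamma) \le a$ and $n\,\mathrm E(\tilde\gamma) \le 2a$. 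Only these iterates need to be forced into nondegeneracy.

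First I would reduce to the case of finitely many offending iterates. Using density of $\mathcal M^\star$ (Proposition~\ref{thm:weakbumpy}), approximate $\mathbf g_0$ by $\mathbf g_1 \in \mathcal U \cap \mathcal M^\star \cap \mathcal M(a,a)$, so every prime $\mathbf g_1$-geodesic is nondegenerate, hence isolated in $\Lambda$. The compactness argument of Lemma~\ref{thm:lemaconver}, applied to the constant sequence of metrics equal to $\mathbf g_1$, shows that any infinite family of distinct prime $\mathbf g_1$-geodesics with $\mathrm E \le a$ would subconverge to a nonconstant closed $\mathbf g_1$-geodesic, contradicting isolation. Thus the set $\{\tilde\gamma_1, \ldots, \tilde\gamma_K\}$ of prime $\mathbf g_1$-geodesics with $\mathrm E \le a$ is finite modulo $\mathds S^1$, and the set of iterate pairs $(\tilde\gamma_j, n_j)$ with $2 \le n_j \le 2a/\mathrm E(\tilde\gamma_j)$ is a finite collection.

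Next, I would treat each offending iterate via a local Klingenberg perturbation. For each $j$, choose a point $p_j = \tilde\gamma_j(\theta_j)$ that is not a self-intersection of $\tilde\gamma_j$ and that lies off the finite union of images of the other $\tilde\gamma_k$; such $p_j$ exists because $\tilde\gamma_j$ is prime. In a small $\mathbf g_{\mathrm R}$-ball $U_j$ around $p_j$, disjoint from every other relevant geodesic image, one supports a symmetric $(0,2)$-tensor perturbation $h_j$ which leaves each other $\tilde\gamma_k$ (and all its iterates) completely unaffected, while deforming the linearized Poincar\'e map of $\tilde\gamma_j$ at $p_j$ by an arbitrarily prescribed symplectic element close to the identity. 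Since there are only finitely many $n_j$'s to handle for each $\tilde\gamma_j$, and the ``bad'' Poincar\'e spectra are a finite union of proper closed subsets of the parameter space of admissible perturbations, a generic small $h_j$ moves the spectrum of $dP_{\tilde\gamma_j}^{n_j}$ off $1$ in its unexpected directions for every admissible $n_j$ simultaneously. Iterating the construction over $j=1,\ldots,K$ with each perturbation chosen small enough to remain in $\mathcal U$ and preserve the adjustments made in earlier steps produces the desired $\mathbf g \in \mathcal U \cap \mathcal M(a,2a)$.

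The hardest part is the lightlike case: when $\tilde\gamma_j$ is lightlike the standard Klingenberg argument does not directly produce arbitrary infinitesimal deformations of the Poincar\'e map, because the symplectic normal bundle along a null geodesic must be replaced by a quotient construction and the parameter space of effective metric perturbations is correspondingly smaller. For such $\tilde\gamma_j$ I would invoke the dedicated lightlike perturbation of Proposition~\ref{thm:prop2} and Corollary~\ref{thm:cor3} in place of the Klingenberg step. A secondary issue is ruling out that the small perturbations $h_j$ create new prime $\mathbf g$-geodesics with $\mathrm E \le a$ whose iterates might themselves be degenerate; this is controlled by the implicit function theorem applied to the closed geodesic equation near each (nondegenerate) $\tilde\gamma_k$, combined with the openness of $\mathcal M(a,a)$ furnished by Lemma~\ref{thm:lemit1}.
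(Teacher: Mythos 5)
Your proposal is correct and follows essentially the same route as the paper: reduce to the finitely many prime geodesics of energy $\le a$ (which are isolated since $\mathbf g_0\in\mathcal M(a,a)$), kill degeneracy of the offending iterates by localized Klingenberg perturbations, deal with lightlike primes by first invoking Proposition~\ref{thm:prop2}, and argue via a compactness/limit argument that no new low-energy primes appear. Two small remarks. First, your preliminary perturbation into $\mathcal M^\star$ is superfluous: membership in $\mathcal M(a,a)$ already forces nondegeneracy, hence isolation, of every prime geodesic of energy $\le a$, which is all that is used. Second, you are actually more explicit than the paper about which iterates must be controlled (all $n$ with $n\,\mathrm E(\tilde\gamma_j)\le 2a$, a finite set by Lemma~\ref{thm:lemit2}), and you correctly invoke the full strength of Klingenberg's perturbation of the linearized Poincar\'e map to handle them simultaneously; the paper states the step in terms of Corollary~\ref{thm:cor3} and ``two-fold coverings,'' which is shorthand for the same mechanism (the citation to \cite[Prop.~3.3.7]{Kli} inside Corollary~\ref{thm:cor3} carries the general statement you need), so your bookkeeping is a welcome clarification rather than a different argument.
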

\begin{proof}
Let $\mathbf g_0\in\mathcal M(a,a)$ be fixed and let $\mathcal U$ be an arbitrary open
neighborhood of $\mathbf g_0$ in $\mathcal M(a,a)$. There exists only a finite number of
geometrically distinct\footnote{
Two closed geodesics $\gamma_1,\gamma_2:\mathds S^1\to M$ are \emph{geometrically distinct} if
the sets $\gamma_1(\mathds S^1)$ and $\gamma_2(\mathds S^1)$ are distinct. In particular, geometrically distinct
geodesics belong to different $\mathds S^1$-orbits of $\Lambda$.}
prime closed $\mathbf g_0$-geodesics of energy less than or equal to $a$, say
$\gamma_1$,\ldots,$\gamma_r$, and they are all nondegenerate by assumption.
Namely, if there were infinitely many, then they would accumulate to a necessarily degenerate
closed prime $\mathbf g_0$-geodesic of energy less than or equal to $a$; but such geodesic
does not exist (this can be shown as in the proof of Lemma \ref{thm:lemaconver}).

Now, there exists an open neighborhood $\mathcal U_0\subset\mathcal U$ of $\mathbf g_0$, which is
the domain of smooth functions $\gamma_j:\mathcal U_0\to\Lambda$, $j=1,\ldots,r$, with the following
properties:
\begin{itemize}
\item[(a)] $\gamma_j(\mathbf g)$ is a closed prime nondegenerate $\mathbf g$-geodesic for all $\mathbf g\in\mathcal U_0$;
\item[(b)] given $\mathbf g\in\mathcal U_0$, if $\gamma$ is a closed prime $\mathbf g$-geodesic
near one of the $\gamma_j$'s, then $\gamma$ coincides with $\gamma_j(\mathbf g)$.
\end{itemize}
The reader will find a proof of this claim in Proposition~\ref{thm:prop1}.

It is easy to see that given $\mathbf g$ sufficiently near $\mathbf g_0$, then every
$\mathbf g$-closed geodesics $z$ with $\mathrm E(z)\le a$ coincides with
one of the $\gamma_j(\mathbf g)$'s. Namely, assume that this were not the case; then,
there would exist a sequence $\mathbf g_n$ tending to $\mathbf g_0$ and a sequence
$z_n$ of $\mathbf g_n$-closed geodesics with $\mathrm E(z_n)\le a$
and such that $z_n$ does not coincide with any of the $\gamma_j(\mathbf g_n)$.
By (b) above, $z_n$ must then stay away from some open subset $V$ of $\Lambda$ containing the
$\gamma_j$'s. Arguing as in the proof of Lemma~\ref{thm:lemit1}, one would then obtain
a $C^2$-limit $z_\infty$ of (a suitable subsequence of) $z_n$, which is a $\mathbf g_0$-geodesic
with $\mathrm E(z_\infty)\le a$, and that does not coincide with any of the $\gamma_j$'s.
This is impossible, and our assertion is proved.

Now, we claim that we can find $\mathbf g\in\mathcal U_0$ such that all the $\gamma_j(\mathbf g)$ are nondegenerate,
as well as their two-fold iterates $\gamma_j(\mathbf g)^{(2)}$. A proof of this claim follows easily from
a local perturbation argument, see Corollary~\ref{thm:cor3}. More precisely, the result of Corollary~\ref{thm:cor3}
has to be used repeatedly for each $\gamma_j(\mathbf g)$, $j=1,\ldots,r$; the perturbation at the $(j+1)$-st step
has to be chosen small enough so that $\gamma_1(\mathbf g)$, \dots, $\gamma_j(\mathbf g)$ remain
nondegenerate together with their two-fold coverings. Moreover, as observed above,
the perturbation $\mathbf g$ of $\mathbf g_0$ can be found in such a way that
$\mathbf g$ has no closed geodesic of minimal energy less than or equal to $a$ that does not coincide
with any of the $\gamma_j(\mathbf g_0)$'s.
Then, it follows that $\mathbf g$ belongs to $\mathcal M(a,2a)$, because all its closed geodesics
of minimal energy less than or equal to $a$ and their two-fold coverings are nondegenerate.
Then, $\mathcal U\cap\mathcal M(a,2a)\supset\mathcal U_0\cap\mathcal M(a,2a)\ne\emptyset$, which proves
that $\mathcal M(a,2a)$ is dense in $\mathcal M(a,a)$.
\end{proof}
\begin{cor}\label{thm:lemit6}
For all $a>0$, $\mathcal M\big(\frac32a,\frac32a\big)$ is dense in $\mathcal M(a,a)$.
\end{cor}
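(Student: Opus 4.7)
The plan is to deduce the corollary directly by chaining the two density statements of Lemma~\ref{thm:lemit4} and Lemma~\ref{thm:lemit5}, using the fact that $\mathcal M(a,2a)$ sits as an \emph{open} subset of $\mathcal M(a,a)$ (so that density is transitive across the sequence).

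The set-up is the following. By Lemma~\ref{thm:lemit5}, $\mathcal M(a,2a)$ is dense in $\mathcal M(a,a)$; by Lemma~\ref{thm:lemit4}, $\mathcal M\big(\tfrac32a,\tfrac32a\big)\cap\mathcal M(a,2a)$ is dense in $\mathcal M(a,2a)$. Finally, by Lemma~\ref{thm:lemit1} each of the sets $\mathcal M(a,a)$, $\mathcal M(a,2a)$, $\mathcal M\big(\tfrac32a,\tfrac32a\big)$ is open in $\mathrm{Met}(M,i;k)$, and in particular $\mathcal M(a,2a)$ is an open subspace of $\mathcal M(a,a)$.

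The argument then runs as follows. Fix any nonempty open subset $\mathcal U$ of $\mathcal M(a,a)$. By the density provided by Lemma~\ref{thm:lemit5}, the intersection $\mathcal U\cap\mathcal M(a,2a)$ is nonempty, and it is open in $\mathcal M(a,2a)$ because $\mathcal U$ is open in $\mathcal M(a,a)$ and $\mathcal M(a,2a)$ carries the subspace topology. Applying Lemma~\ref{thm:lemit4} to the nonempty open subset $\mathcal U\cap\mathcal M(a,2a)$ of $\mathcal M(a,2a)$ yields an element of $\mathcal M\big(\tfrac32a,\tfrac32a\big)\cap\mathcal M(a,2a)\cap\mathcal U$, which in particular lies in $\mathcal M\big(\tfrac32a,\tfrac32a\big)\cap\mathcal U$. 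Since $\mathcal U$ was arbitrary, this shows that $\mathcal M\big(\tfrac32a,\tfrac32a\big)$ meets every nonempty open subset of $\mathcal M(a,a)$, i.e., it is dense in $\mathcal M(a,a)$.

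There is essentially no substantive obstacle here: the entire content has already been distilled into Lemmas~\ref{thm:lemit4} and \ref{thm:lemit5} (the latter containing the genuinely delicate Klingenberg-style local perturbation), and the only point deserving a moment of care is the transitivity of density, which is justified by the openness of $\mathcal M(a,2a)$ inside $\mathcal M(a,a)$. The corollary is therefore best viewed as packaging the quantitative gain $(a,a)\rightsquigarrow(a,2a)\rightsquigarrow(\tfrac32a,\tfrac32a)$ produced by Lemmas~\ref{thm:lemit5} and \ref{thm:lemit4}, in preparation for the iterative Anosov-style bootstrap that will prove density of each $\mathcal M(n,n)$.
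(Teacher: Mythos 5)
Your proof is correct and is exactly what the paper's terse ``It follows at once from Lemmas~\ref{thm:lemit4} and \ref{thm:lemit5}'' packs into one line: chaining the two density statements through the intermediate set $\mathcal M(a,2a)$. One small note: the openness of $\mathcal M(a,2a)$ in $\mathcal M(a,a)$ is not actually needed for the transitivity step (density is transitive through any chain of nested subspaces in their induced topologies), and your own fleshed-out argument never invokes it --- you only use that $\mathcal U\cap\mathcal M(a,2a)$ is open in $\mathcal M(a,2a)$, which is automatic from the subspace topology.
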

\begin{proof}
It follows at once from Lemmas~\ref{thm:lemit4} and \ref{thm:lemit5}.
\end{proof}
\begin{prop}\label{thm:lemit7}
For all $b>a$, $\mathcal M(b,b)$ is dense in $\mathcal M(a,a)$.
\end{prop}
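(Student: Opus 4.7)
The plan is to iterate Corollary~\ref{thm:lemit6} to amplify the ratio between the energy bound and the minimal-energy bound, then conclude using the obvious monotonicity of the family $\mathcal M(\cdot,\cdot)$ already noted in the text: $\mathcal M(a',b')\subset\mathcal M(a,b)$ whenever $a\le a'\le b'$ and $a\le b\le b'$, because imposing a weaker constraint (smaller thresholds) on the geodesics to be nondegenerate selects a larger subset of metrics.

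First I would prove by finite induction on $n\ge 1$ the statement: for every $a>0$, the set $\mathcal M\big((\tfrac32)^n a,(\tfrac32)^n a\big)$ is dense in $\mathcal M(a,a)$. The case $n=1$ is precisely Corollary~\ref{thm:lemit6}. For the inductive step, apply Corollary~\ref{thm:lemit6} with $a$ replaced by $(\tfrac32)^n a$ to obtain that $\mathcal M\big((\tfrac32)^{n+1}a,(\tfrac32)^{n+1}a\big)$ is dense in $\mathcal M\big((\tfrac32)^n a,(\tfrac32)^n a\big)$; combining this with the inductive hypothesis and using the elementary fact that density is transitive (if $A\subset B\subset C$ with $A$ dense in $B$ in the ambient space and $B$ dense in $C$, then $\overline A\supset\overline B\supset C$, so $A$ is dense in $C$), I get density of $\mathcal M\big((\tfrac32)^{n+1}a,(\tfrac32)^{n+1}a\big)$ in $\mathcal M(a,a)$.

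Finally, given $b>a$, choose $n$ large enough that $(\tfrac32)^n a\ge b$. By the monotonicity recalled above,
\[
\mathcal M\big((\tfrac32)^n a,(\tfrac32)^n a\big)\subset\mathcal M(b,b),
\]
and since a superset of a dense subset is dense, $\mathcal M(b,b)$ is dense in $\mathcal M(a,a)$. There is no essential obstacle at this step: all of the substantive work, including the local metric perturbation for prime nondegenerate geodesics and their two-fold iterates, has been carried out in Lemmas~\ref{thm:lemit3}--\ref{thm:lemit5} and in Corollary~\ref{thm:lemit6}; the present proposition is a clean telescoping consequence.
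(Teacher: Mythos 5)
Your argument is exactly the paper's: an induction on powers of $\tfrac32$ using Corollary~\ref{thm:lemit6}, transitivity of density, and the monotonicity $\mathcal M(a',b')\subset\mathcal M(a,b)$ to compare with $\mathcal M(b,b)$. You even silently fix a small slip in the published version, which asks for $(\tfrac32)^n>b$ rather than the condition $(\tfrac32)^n a\ge b$ that is actually needed when $a<1$.
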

\begin{proof}
An immediate induction argument using Corollary~\ref{thm:lemit6} shows that for all $n\ge1$,
$\mathcal M\big((\frac32)^na,(\frac32)^na\big)$ is dense in $\mathcal M(a,a)$.
Choose $n$ such that $(\frac32)^n>b$; then \[\mathcal M\big((\tfrac32)^na,(\tfrac32)^na\big)\subset\mathcal M(b,b),\]
and therefore $\mathcal M(b,b)$ is dense in $\mathcal M(a,a)$.
\end{proof}
Note that for $b\le a$, $\mathcal M(b,b)$ contains $\mathcal M(a,a)$; thus, for all $a$ and $b$,
$\mathcal M(a,a)\cap\mathcal M(b,b)$ is dense in $\mathcal M(a,a)$.

\begin{teo}[Semi-Riemannian bumpy metric theorem]
\label{thm:finalbumpy}
For all $a>0$, $\mathcal M(a,a)$ is dense in $\mathrm{Met}(M,i;k)$, and so the set of bumpy metrics
$\bigcap_{n\ge1}\mathcal M(n,n)$ is generic in $\mathrm{Met}(M,i;k)$.
\end{teo}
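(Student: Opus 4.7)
The plan is to combine the openness statement of Lemma~\ref{thm:lemit1}, the local absence of short geodesics from Lemma~\ref{thm:lemit2}, and the density statement of Proposition~\ref{thm:lemit7} in order to upgrade density of $\mathcal{M}(a,a)$ from $\mathcal{M}(\overline a,\overline a)$ (for some very small $\overline a$) to the whole of $\mathrm{Met}(M,i;k)$.

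The first step is the density claim: fix $a>0$ and an arbitrary $\mathbf g_0\in\mathrm{Met}(M,i;k)$, and show that every $\mathrm{Met}(M,i;k)$-neighborhood of $\mathbf g_0$ meets $\mathcal{M}(a,a)$. Apply Lemma~\ref{thm:lemit2} to obtain $\overline a>0$ and an open neighborhood $\mathcal U_0$ of $\mathbf g_0$ such that every prime closed $\mathbf g$-geodesic $\gamma$, $\mathbf g\in\mathcal U_0$, satisfies $\mathrm E(\gamma)\ge\overline a$. Since for an arbitrary non-constant closed geodesic $\gamma$ the minimal energy $\mathrm E_{\mathrm{min}}(\gamma)$ is precisely the energy of the underlying prime geodesic, we get $\mathrm E_{\mathrm{min}}(\gamma)\ge\overline a$ for \emph{all} non-constant closed $\mathbf g$-geodesics, $\mathbf g\in\mathcal U_0$. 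Hence, for any $a_0\in(0,\overline a)$, the condition defining $\mathcal{M}(a_0,a_0)$ is vacuously satisfied on $\mathcal U_0$, i.e., $\mathcal U_0\subset\mathcal{M}(a_0,a_0)$. If $a\le a_0$, then already $\mathcal U_0\subset\mathcal{M}(a,a)$ and we are done; otherwise $a>a_0$, and Proposition~\ref{thm:lemit7} gives that $\mathcal{M}(a,a)$ is dense in $\mathcal{M}(a_0,a_0)$. Since $\mathcal U_0$ is an open subset of $\mathcal{M}(a_0,a_0)$, this forces $\mathcal{M}(a,a)\cap\mathcal U_0$ to be dense in $\mathcal U_0$, and in particular to meet every neighborhood of $\mathbf g_0$. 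Thus $\mathcal{M}(a,a)$ is dense in $\mathrm{Met}(M,i;k)$.

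For the genericity of the set of bumpy metrics, combine density just proved with the openness established in Lemma~\ref{thm:lemit1}: for each integer $n\ge1$, $\mathcal{M}(n,n)$ is an open and dense subset of $\mathrm{Met}(M,i;k)$. Since $\mathrm{Met}(M,i;k)$ is an open subset of a Banach space and hence a Baire space, the countable intersection $\bigcap_{n\ge1}\mathcal{M}(n,n)$ is generic. It remains to observe that this intersection coincides with the set of bumpy metrics: given $\mathbf g\in\bigcap_{n\ge1}\mathcal{M}(n,n)$ and an arbitrary closed $\mathbf g$-geodesic $\gamma$, choose any integer $n\ge\mathrm E(\gamma)$; then $\mathrm E_{\mathrm{min}}(\gamma)\le\mathrm E(\gamma)\le n$, so $\gamma$ is nondegenerate. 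Conversely, every bumpy metric trivially lies in each $\mathcal{M}(n,n)$.

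The substantive work has already been carried out in Lemmas~\ref{thm:lemit1}--\ref{thm:lemit5} and Proposition~\ref{thm:lemit7}; what remains is essentially a careful bookkeeping argument. The only mildly delicate point is checking that the lower bound $\overline a$ supplied by Lemma~\ref{thm:lemit2} (which only concerns \emph{prime} geodesics) does indeed yield a lower bound on $\mathrm E_{\mathrm{min}}$ for all non-constant closed geodesics, which holds by the very definition of minimal energy. Once this is in place the argument is a one-line application of Proposition~\ref{thm:lemit7} followed by the Baire category theorem.
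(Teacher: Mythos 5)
Your proof is correct and follows essentially the same route as the paper's: fix $\mathbf g_0$, use Lemma~\ref{thm:lemit2} to place it (and, via Lemma~\ref{thm:lemit1}, an open neighborhood) inside some $\mathcal M(a_0,a_0)$ with $a_0$ small, then invoke Proposition~\ref{thm:lemit7} to push density up to the given $a$, and finish with the Baire category theorem. You spell out a couple of small points the paper leaves implicit (that the prime lower bound $\overline a$ controls $\mathrm E_{\mathrm{min}}$ of all nonconstant closed geodesics, and the trivial case $a\le a_0$), but the argument and its dependence on the auxiliary lemmas are identical.
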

\begin{proof}
Fix $a>0$ and $\mathbf g$ in $\mathrm{Met}(M,i;k)$. By Lemma~\ref{thm:lemit2} there exists $\overline a>0$ such that
all closed $\mathbf g$-geodesic has total energy greater than or equal to $\overline a$.
Thus, $\mathbf g\in\mathcal M\big(\frac{\overline a}2,\frac{\overline a}2\big)$.
Given any neighborhood $\mathcal U$ of $\mathbf g$ in $\mathrm{Met}(M,i;k)$, then, since $\mathcal M\big(\frac{\overline a}2,\frac{\overline a}2\big)$
is open in $\mathrm{Met}(M,i;k)$, by
Proposition~\ref{thm:lemit7}, $\mathcal U\cap\mathcal M\big(\frac{\overline a}2,\frac{\overline a}2\big)\cap
\mathcal M(a,a)$ is not empty. Thus, $\mathcal M(a,a)$ is dense in $\mathrm{Met}(M,i;k)$.
\end{proof}
We have the following immediate corollary:
\begin{cor}
There exists a generic set $\mathcal B$ of semi-Riemannian metrics on $M$ such that, for all $a>0$ and
every $\mathbf g\in\mathcal B$, the number of (geometrically distinct) closed $\mathbf g$-geodesics $\gamma$ in $M$
with $\mathrm E(\gamma)\le a$ is finite.
\end{cor}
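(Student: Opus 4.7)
The plan is to set $\mathcal B=\bigcap_{n\ge1}\mathcal M(n,n)$, the set of bumpy metrics, which is generic in $\mathrm{Met}(M,i;k)$ by Theorem~\ref{thm:finalbumpy}. Fix $\mathbf g\in\mathcal B$ and $a>0$; I must bound the number of geometrically distinct closed $\mathbf g$-geodesics $\gamma$ with $\mathrm E(\gamma)\le a$. Since every such $\gamma$ is an iterate of a unique prime closed geodesic which shares its image and has total energy $\mathrm E_{\mathrm{min}}(\gamma)\le\mathrm E(\gamma)\le a$, it suffices to bound the number of geometrically distinct prime closed $\mathbf g$-geodesics of energy at most $a$. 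Choosing any integer $n\ge a$, each such prime $\gamma$ satisfies $\mathrm E_{\mathrm{min}}(\gamma)\le\mathrm E(\gamma)\le a\le n$, so $\mathbf g\in\mathcal M(n,n)$ forces $\gamma$ to be nondegenerate.

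Assume, for contradiction, that there exists a sequence $\{\gamma_k\}_{k\in\N}$ of pairwise geometrically distinct prime closed $\mathbf g$-geodesics with $\mathrm E(\gamma_k)\le a$. Applying Lemma~\ref{thm:lemaconver} with the constant sequence $\mathbf g_k\equiv\mathbf g$, a subsequence (which I still denote by $\gamma_k$) converges in $C^2$ to a nonconstant closed $\mathbf g$-geodesic $\gamma_\infty$ with $\mathrm E(\gamma_\infty)\le a$, which is therefore nondegenerate by the preceding paragraph.

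The key step, and the only real difficulty, is to show that the $\mathds S^1$-orbit of $\gamma_\infty$ is isolated among closed $\mathbf g$-geodesics. By Proposition~\ref{thm:exSngeo}, I may choose a good submanifold $S$ meeting the orbit $\mathds S^1\cdot\gamma_\infty$ at some point $\hat\gamma_\infty$; because $\mathbf g$ is bumpy, the restriction $f_{\mathbf g}\vert_S$ is a Morse function, so $\hat\gamma_\infty$ is one of its isolated critical points. By the construction carried out in the proof of Proposition~\ref{thm:exSngeo}, $\mathds S^1\cdot S$ contains an open $\mathds S^1$-invariant neighborhood $V$ of $\hat\gamma_\infty$ in $\Lambda$. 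The $C^2$-convergence $\gamma_k\to\gamma_\infty$ entails $H^1$-convergence in $\Lambda$, so for sufficiently large $k$ some parameter shift $g_k\cdot\gamma_k$ lies in $S$ near $\hat\gamma_\infty$; this $g_k\cdot\gamma_k$ is still a closed $\mathbf g$-geodesic, hence a critical point of $f_{\mathbf g}\vert_S$ by goodness of $S$, and isolation forces $g_k\cdot\gamma_k=\hat\gamma_\infty$ for all large $k$. Consequently $\gamma_k$ and $\gamma_\infty$ have the same image for all large $k$, contradicting pairwise geometric distinctness and completing the argument.
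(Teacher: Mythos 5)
Your proof is correct and takes essentially the same route as the paper's: $\mathcal B$ is the set of bumpy metrics (generic by Theorem~\ref{thm:finalbumpy}), and finiteness follows from the same accumulation argument the paper references via Lemma~\ref{thm:lemit5}, with your good-submanifold isolation step making explicit what the paper compresses into the assertion that the limit would be ``necessarily degenerate.'' One small imprecision: Lemma~\ref{thm:lemaconver} as stated hypothesizes that the $\gamma_k$ are \emph{degenerate}, which yours are not, so you should invoke the compactness argument from its proof rather than the lemma itself --- exactly as the paper phrases it, ``as in the proof of Lemma~\ref{thm:lemaconver}.''
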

\begin{proof}
The set $\mathcal B$ of semi-Riemannian metrics satisfying the hypothesis contains the set of bumpy metrics.
Namely, given $a>0$, choose $n\in\mathds N$ with $n\ge a$. Given $\mathbf g\in\mathcal M(n,n)$, then
there are finitely many orbits $\mathds S^1\cdot\gamma$ of closed $\mathbf g$-geodesic $\gamma$ with
$\mathrm E(\gamma)\le a$, see the proof of Lemma~\ref{thm:lemit5}. Thus,
$\mathcal B\supset\bigcap_{n\ge1}\mathcal M(n,n)$.
\end{proof}
\end{section}

\begin{section}{Further genericity results for semi-Riemannian metrics}
\label{sec:further}
As in the Riemannian case, the semi-Riemannian bumpy metric theorem paves the way to a whole
collection of further genericity results for the geodesic flow. Namely, the closed geodesics
of bumpy metrics are isolated, and genericity of a given property of the geodesic flow can be
established using local perturbations of the metric around each closed geodesic.
In this section we give an idea of this procedure, by establishing a semi-Riemannian
analogue of a genericity result for higher order properties of the linearized Poincar\'e map
of closed geodesics due to Klingenberg and Takens, see \cite{KliTak}.

Local perturbations of metrics around a (closed) geodesic $\gamma$ are usually dealt with
using Fermi coordinates. Such coordinates $(t,x_1,\ldots,x_n)$ are characterized
by the fact that $\gamma$ corresponds to the segment $(t,0,\ldots,0)$, the first
derivatives of the metric coefficients $\mathbf g_{ij}$ vanish along $\gamma$, and
the coordinate fields $\frac\partial{\partial x_i}$ are orthogonal to $\dot\gamma$
along $\gamma$, for all $i=1,\ldots,n$. When the metric $\mathbf g$ is semi-Riemannian,
then this type of coordinates exists only along \emph{non lightlike} geodesics $\gamma$.
Note in fact that the properties of Fermi coordinates imply that $\dot\gamma(t)$ and its
orthogonal space $\dot\gamma(t)^\perp$ generate the entire tangent space $T_{\gamma(t)}M$,
i.e., $\gamma$ is not lightlike.
In order to extend the local perturbation techniques to semi-Riemannian geodesics, we therefore need the following
fact, whose basic idea has already been used implicitly in the proof of Lemma~\ref{thm:lemit5}:
\begin{prop}\label{thm:Laopen}
For all $a>0$, the set:
\begin{multline}
\mathcal L(a)=\Big\{\mathbf g\in\mathrm{Met}(M,i;k):\ \text{every closed $\mathbf g$-geodesic $\gamma$
with $\mathrm E(\gamma)<a$}\\\text{is nondegenerate and non lightlike}\Big\}
\end{multline}
is open and dense in $\mathrm{Met}(M,i;k)$. In particular, the set $\mathcal L=\bigcap_{n\in\mathds N}\mathcal L(n)$
consisting of all metrics without lightlike closed geodesics and without degenerate closed geodesics is generic
in $\mathrm{Met}(M,i;k)$.
\end{prop}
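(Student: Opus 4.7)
The proposition asserts both openness and density of $\mathcal L(a)$, from which the genericity of $\mathcal L=\bigcap_{n\in\N}\mathcal L(n)$ follows immediately, since $\mathrm{Met}(M,i;k)$ is a Baire space. The plan is to prove openness and density separately, both by adapting techniques already developed in Section~3.

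For openness I would mimic the proof of Lemma~\ref{thm:lemit1}. Take a sequence $\mathbf g_n\to\mathbf g_\infty$ with each $\mathbf g_n\notin\mathcal L(a)$, so that each $\mathbf g_n$ admits a closed geodesic $\gamma_n$ with $\mathrm E(\gamma_n)<a$ that is either degenerate or lightlike. Lemma~\ref{thm:lemit2} prevents the $\gamma_n$ from collapsing to constants, and Lemma~\ref{thm:lemaconver} extracts a $C^2$-convergent subsequence whose limit $\gamma_\infty$ is a nontrivial closed $\mathbf g_\infty$-geodesic with $\mathrm E(\gamma_\infty)\le a$. The pathology inherits: Lemma~\ref{thm:lemaconver} itself hands degeneracy to $\gamma_\infty$, while the identity $\mathbf g_n(\dot\gamma_n,\dot\gamma_n)\equiv 0$ passes through $C^2$-convergence to yield lightlikeness. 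A standard refinement of the extraction, discarding $\gamma_n$'s whose energies are too close to $a$, upgrades $\mathrm E(\gamma_\infty)\le a$ to $\mathrm E(\gamma_\infty)<a$ and gives $\mathbf g_\infty\notin\mathcal L(a)$.

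For density, fix $\mathbf g_0\in\mathrm{Met}(M,i;k)$ and a neighborhood $\mathcal U$. By Theorem~\ref{thm:finalbumpy} I may replace $\mathbf g_0$ with a bumpy metric still in $\mathcal U$; the compactness argument in the proof of Lemma~\ref{thm:lemit5} then ensures that the closed $\mathbf g_0$-geodesics of energy at most $a$ form finitely many $\mathds S^1$-orbits, with representatives $\gamma_1,\dots,\gamma_r$. For each lightlike $\gamma_j$ I would pick a point $p_j$ on the image of $\gamma_j$ avoiding its self-intersections and the images of the other $\gamma_i$, take a bump function $\phi_j\ge 0$ supported in a small neighborhood of $p_j$ with $\phi_j(p_j)>0$, and set $\mathbf g_s=\mathbf g_0+s\phi_j\mathbf g_{\mathrm R}$. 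Nondegeneracy together with the implicit function theorem yields smooth families of closed geodesics $\gamma_i(s)$ of $\mathbf g_s$ starting at $\gamma_i$, and a direct first variation of the first integral $c_j(s)=\mathbf g_s(\dot\gamma_j(s),\dot\gamma_j(s))$, in which the cross term $2\int_{\mathds S^1}\mathbf g_0(D_t\xi_j,\dot\gamma_j)\,\mathrm d\theta$ vanishes by periodicity and the geodesic equation for $\gamma_j$, gives
\[c_j'(0)=\int_{\mathds S^1}\phi_j(\gamma_j)\,\mathbf g_{\mathrm R}(\dot\gamma_j,\dot\gamma_j)\,\mathrm d\theta>0,\]
so that $\gamma_j(s)$ is non-lightlike for small $s>0$. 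Iterating this local perturbation for $j=1,\dots,r$, with each parameter chosen small enough that the previously modified $\gamma_i(s)$ remain nondegenerate and non-lightlike, and supports chosen small enough that no new closed geodesic of energy at most $a$ is produced (again by the Lemma~\ref{thm:lemit5} argument), yields $\mathbf g\in\mathcal U\cap\mathcal L(a)$.

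The main obstacle is the bookkeeping in the iterative perturbation of the density step: every stage must preserve both the non-lightlikeness and the nondegeneracy achieved at previous stages and must avoid producing spurious low-energy closed geodesics. All three requirements are openness-type claims---continuity of the first integral $\mathbf g(\dot\gamma,\dot\gamma)$ for the first, Theorem~\ref{thm:finalbumpy} applied quantitatively for the second, and the compactness argument of Lemma~\ref{thm:lemit5} for the third---so the construction ultimately reduces to quantitative control of the perturbation sizes rather than any genuinely new analytic input.
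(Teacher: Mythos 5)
Your proposal follows the paper's own two--part strategy closely: openness by the same $C^2$--convergence argument used in Lemmas~\ref{thm:lemaconver} and~\ref{thm:lemit1}, and density by first reducing to $\mathcal M(a,a)$ via Theorem~\ref{thm:finalbumpy} and then perturbing each of the finitely many lightlike closed geodesics with a locally supported tensor and computing the first variation of $\mathbf g(\dot\gamma,\dot\gamma)$. The density half simply unrolls what the paper delegates to Proposition~\ref{thm:prop2}: your choice $\mathbf h=\phi_j\mathbf g_{\mathrm R}$ and the resulting derivative $\int_{\mathds S^1}\phi_j\,\mathbf g_{\mathrm R}(\dot\gamma_j,\dot\gamma_j)\,\mathrm d\theta>0$ is exactly the paper's computation $\mathrm d\phi(\mathbf g_0)\mathbf h=\tfrac12\int_{\mathds S^1}\mathbf h(\dot\gamma_0,\dot\gamma_0)\,\mathrm d\theta$ specialized to a positive--definite $\mathbf h$, followed by the same cutoff to localize the support. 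The iteration bookkeeping you describe is also the one the paper uses.

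One step in your openness argument, however, does not work as stated. You correctly notice that the limiting argument only gives $\mathrm E(\gamma_\infty)\le a$ while the definition of $\mathcal L(a)$ uses the strict inequality $\mathrm E(\gamma)<a$, and you propose to ``discard $\gamma_n$'s whose energies are too close to $a$'' to get $\mathrm E(\gamma_\infty)<a$. But the $\gamma_n$ are witnesses chosen for you by the hypothesis $\mathbf g_n\notin\mathcal L(a)$, not objects you are free to prune: it can perfectly well happen that for each $n$ the \emph{only} degenerate or lightlike closed $\mathbf g_n$--geodesic with $\mathrm E<a$ has energy tending to $a^-$, in which case no subsequence stays bounded away from $a$ and the limit has $\mathrm E(\gamma_\infty)=a$, which does not contradict $\mathbf g_\infty\in\mathcal L(a)$. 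The paper silently suffers the same mismatch (its proof of openness writes $\mathrm E(\gamma_n)\le a$ and concludes $\mathrm E(\gamma_\infty)\le a$); the clean fix is to define $\mathcal L(a)$ with $\mathrm E(\gamma)\le a$, as is done for $\mathcal M(a,b)$. This leaves $\mathcal L=\bigcap_{n}\mathcal L(n)$ unchanged and makes the openness argument close without the invalid discarding step.
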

\begin{proof}
Note that $\mathcal L(a)\subset\mathcal M(a,a)$.
The fact that $\mathcal L(a)$ is open follows by the same argument used
in the proof of Lemma~\ref{thm:lemaconver}. Namely, if $\mathbf g_n$ is a sequence
in the complement of $\mathcal L(n)$ converging to $\mathbf g_\infty$ in
$\mathrm{Met}(M,i;k)$, and $\gamma_n$ is a sequence of closed $\mathbf g_n$-geodesics
that are either degenerate or lightlike\footnote{the two possibilities being not mutually exclusive}
with $\mathrm E(\gamma_n)\le a$, then
some subsequence of $\gamma_n$ is $C^2$-convergent to some $\mathbf g_\infty$ closed
geodesic $\gamma_\infty$ with $\mathrm E(\gamma_\infty)\le a$. If infinitely many
$\gamma_n$ are degenerate, then so is $\gamma_\infty$; if infinitely many $\gamma_n$ are
lightlike, then also $\gamma_\infty$ is lightlike. Hence $\mathbf g_\infty\not\in\mathcal L(a)$,
and $\mathcal L(a)$ is open.

In order to show that $\mathcal L(a)$ is dense in $\mathrm{Met}(M,i;k)$, by Theorem~\ref{thm:finalbumpy}
it suffices to show that it is dense in $\mathcal M(a,a)$. To this end, we argue as in the proof
of Lemma~\ref{thm:lemit5}: for $\mathbf g_0\in\mathcal M(a,a)$, there exists an open neighborhood
$\mathcal U_0$ of $\mathbf g_0$ in $\mathrm{Met}(M,i;k)$, a positive integer $r$ and smooth functions
$\gamma_1,\ldots,\gamma_r:\mathcal U_0\to\Lambda$ such that $\gamma_j(\mathbf g)$ is a nondegenerate closed
$\mathbf g$-geodesic for all $\mathbf g\in\mathcal U_0$, and given ${\mathbf g}\in\mathcal U_0$ and
$\gamma$ any closed $\mathbf g$-geodesic with $\mathrm E(\gamma)\le a$, then $\gamma$ must coincide
with one of the $\gamma_j(\mathbf g)$'s. Now, using Proposition~\ref{thm:prop2}, it is easy to see
that arbitrarily close to $\mathbf g_0$, one can find metrics $\mathbf g\in\mathcal U_0$
such that none of the $\gamma_j(\mathbf g)$ is lightlike, for $j=1,\ldots,r$\footnote{%
This follows easily from the fact that, by continuity, the set of $\mathbf g\in\mathcal U_0$ such that
$\gamma_j(\mathbf g)$ is not lightlike is open in $\mathcal U_0$ for all $j$.}.
Now, such metrics $\mathbf g$ belong to $\mathcal L(a)$, hence $\mathcal L(a)$ is dense in $\mathcal M(a,a)$.
\end{proof}
In order to state our final result, let us recall the following notations from \cite{KliTak}.
Given positive integers $k$ and $n$, denote by $(\R^{2n},\omega_0)$ the standard symplectic space,
and let $J_{\mathrm s}^k(2n)$ denote the space of $k$-jets at $0$ of symplectic diffeomorphisms of
(open neighborhood of $0$ in) $(\R^{2n},\omega_0)$ that fix $0$. This is a vector space with a product structure, and its invertible
elements form a Lie group. A subset $Q\subset J_{\mathrm s}^k(2n)$ is said to be \emph{invariant}
if $\sigma Q\sigma^{-1}=Q$ for all invertible element $\sigma$ in $J_{\mathrm s}^k(2n)$.

Let $\mathbf g$ be a semi-Riemannian metric  on $M$ and
let $\gamma$ be a closed non lightlike $\mathbf g$-geodesic.
Then, given a small hypersurface $\Sigma\subset M$ passing through $\gamma(0)$
and transverse to $\gamma$, in total analogy with the Riemannian case
one can define the Poincar\'e map $\mathfrak P_{\gamma,\Sigma}:\Sigma^*\to\Sigma^*$,
where $\Sigma^*\subset TM^*$ is given by
\[\Big\{p\in \bigcup_{q\in\Sigma} T_qM^*:\mathbf g^{-1}(p,p)=\mathbf g\big(\dot\gamma(0),\dot\gamma(0)\big)\Big\};\]
here $\mathbf g^{-1}$ denotes the $(2,0)$-tensor on $M$ induced by $\mathbf g$.
This map preserves the symplectic structure on $\Sigma^*$ induced by the canonical symplectic
form of $TM^*$, and it has $\mathbf g\dot\gamma(0)$ as fixed point. Using symplectic coordinates,
one can think of $\mathfrak P_{\gamma,\Sigma}$ as a symplectic diffeomorphism of an open neighborhood
of $0$ in $(\R^{2n},0)$ that fixes $0$. As in \cite{KliTak}, the fact that the $k$-jet of $\mathfrak P_{\gamma,\Sigma}$
at $\mathbf g\dot\gamma(0)$ belongs to some open invariant subset $Q$ of $J_{\mathrm s}^k(2n)$ does not
depend on the choice of the transverse hypersurface $\Sigma$ nor on the choice of symplectic
coordinates on $\Sigma^*$.
\begin{cor}[semi-Riemannian Klingenberg--Takens genericity]
Let $k\ge1$ be fixed and let $Q$ be a dense open invariant subset of $J_{\mathrm s}^k(2n)$.
Then, for $l>k$, the set $\mathcal M_Q$ of all metrics $\mathbf g\in\mathrm{Met}(M,i;l)$ such that:
\begin{itemize}
\item[(i)] all closed $\mathbf g$-geodesic are non lightlike and nondegenerate;
\item[(ii)] given any closed $\mathbf g$-geodesic $\gamma$, then the $k$-th jet of the Poincar\'e map
$\mathfrak P_{\gamma,\Sigma}$ belongs to $Q$,
\end{itemize}
is generic in $\mathrm{Met}(M,i;l)$.
\end{cor}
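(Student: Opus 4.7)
The plan is to combine Proposition~\ref{thm:Laopen} with the classical local perturbation argument of Klingenberg and Takens, adapted to the semi-Riemannian setting. By Proposition~\ref{thm:Laopen}, the set $\mathcal L=\bigcap_{n\ge 1}\mathcal L(n)$ of metrics all of whose closed geodesics are nondegenerate and non lightlike is generic in $\mathrm{Met}(M,i;l)$, so condition (i) is automatic on a generic set, and only condition (ii) requires work. For each $n\ge 1$, define
\[
\mathcal M_Q(n)=\Big\{\mathbf g\in\mathcal L(n):\text{for every closed }\mathbf g\text{-geodesic }\gamma\text{ with }\mathrm E(\gamma)\le n,\ \text{the }k\text{-jet of }\mathfrak P_{\gamma,\Sigma}\text{ lies in }Q\Big\},
\]
so that $\mathcal M_Q\supset\bigcap_{n\ge 1}\mathcal M_Q(n)$. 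Since $\mathrm{Met}(M,i;l)$ is a Baire space, it suffices to prove that every $\mathcal M_Q(n)$ is open and dense.

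For openness, fix $\mathbf g_0\in\mathcal M_Q(n)$. Reasoning as in the proof of Lemma~\ref{thm:lemit5} (invoking Proposition~\ref{thm:prop1}), there exists an open neighborhood $\mathcal U_0$ of $\mathbf g_0$ in $\mathcal L(n)$ and a finite collection of smooth functions $\gamma_1,\ldots,\gamma_r:\mathcal U_0\to\Lambda$ such that each $\gamma_j(\mathbf g)$ is a nondegenerate non lightlike closed $\mathbf g$-geodesic, and every closed $\mathbf g$-geodesic of energy $\le n$ coincides with one of the $\gamma_j(\mathbf g)$'s. Since each $\gamma_j(\mathbf g)$ is non lightlike, the hypersurface $\Sigma$ can be chosen smoothly depending on $\mathbf g$, and the Poincaré map $\mathfrak P_{\gamma_j(\mathbf g),\Sigma}$, together with its $k$-jet at the fixed point $\mathbf g\cdot\dot\gamma_j(\mathbf g)(0)$, depends continuously on $\mathbf g$ (in the $C^l$-topology, with $l>k$). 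Openness of $Q$ in $J^k_{\mathrm s}(2n)$ then yields an open neighborhood of $\mathbf g_0$ contained in $\mathcal M_Q(n)$.

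For density, fix $\mathbf g_0\in\mathrm{Met}(M,i;l)$ and a neighborhood $\mathcal U$. By Proposition~\ref{thm:Laopen}, we may reduce to $\mathbf g_0\in\mathcal L(n)\cap\mathcal U$. Let $\gamma_1,\ldots,\gamma_r$ be its closed geodesics of energy $\le n$; all of them are nondegenerate and non lightlike, hence admit Fermi coordinates in a tubular neighborhood. The key input is the Klingenberg--Takens local perturbation lemma from \cite{KliTak}: perturbations of the metric supported in an arbitrarily small tubular Fermi neighborhood of a nondegenerate non lightlike closed geodesic $\gamma_j$ can realize an arbitrary variation of the $k$-jet of the Poincaré map at $\gamma_j$ within a suitable finite-dimensional slice, and in particular they can push the jet into the open dense set $Q$. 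Using that the $\gamma_j$'s have only finitely many self-intersections and mutually disjoint images away from a finite set, one performs the perturbation successively at $\gamma_1,\gamma_2,\ldots,\gamma_r$, keeping each perturbation small enough that previous $k$-jets remain inside $Q$, that each $\gamma_j$ remains nondegenerate and non lightlike (so still tracked by a smooth function as above), and that no new closed geodesics of energy $\le n$ are created outside the $\gamma_j$'s (this last point follows from the argument already used in the proof of Lemma~\ref{thm:lemit5}). The resulting metric lies in $\mathcal U\cap\mathcal M_Q(n)$.

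The main obstacle is the local perturbation step in the semi-Riemannian context: one must verify that the construction in \cite{KliTak} producing the prescribed variation of the $k$-jet of $\mathfrak P_{\gamma,\Sigma}$ via metric perturbations in Fermi coordinates is insensitive to the signature. Along a non lightlike geodesic the Fermi frame splits $T_{\gamma(t)}M$ as $\R\dot\gamma(t)\oplus\dot\gamma(t)^\perp$ with $\mathbf g\vert_{\dot\gamma^\perp}$ nondegenerate, the Jacobi equation takes its standard second-order linear form, and the transverse section $\Sigma^\ast$ inherits a genuine symplectic structure from $TM^\ast$; consequently the parameter count relating perturbations $h$ of $\mathbf g$ to variations of $j^k_0\mathfrak P_{\gamma,\Sigma}$ is formally identical to the Riemannian one, and only signs in bilinear forms change. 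Once this identification is made, the Klingenberg--Takens argument transfers verbatim, completing the density step.
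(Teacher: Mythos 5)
Your proposal follows essentially the same route as the paper: reduce (i) to Proposition~\ref{thm:Laopen}, introduce energy-truncated sets $\mathcal M_Q(n)\subset\mathcal L(n)$, establish openness from continuity of the $k$-jet of the Poincar\'e map together with openness of $Q$, and establish density in $\mathcal L(n)$ by transferring the Klingenberg--Takens local perturbation argument to non lightlike semi-Riemannian geodesics via Fermi coordinates. The paper's own proof is more terse on these points but relies on exactly the same ingredients, so your argument is a correct (and somewhat more detailed) elaboration of it.
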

\begin{proof}
For all $a>0$, define $\mathcal M_Q(a)$ to be the set of those $\mathbf g\in\mathrm{Met}(M,i;l)$
for which the assumptions (i) and (ii) hold only for those closed $\mathbf g$-geodesics
$\gamma$ with $\mathrm E(\gamma)\le a$. Since $Q$ is open, property (ii) is open relatively to
the $C^l$ topology, for all $l>k$. Thus, repeating the standard openness argument used
in Lemma~\ref{thm:lemit1} and in Proposition~\ref{thm:Laopen},
one proves that $\mathcal M_Q(a)$ is open in $ \mathrm{Met}(M,i;l)$.

Since $\mathcal M_Q(a)\subset\mathcal L(a)$, by Proposition~\ref{thm:Laopen} it suffices to show
that $\mathcal M_Q(a)$ is dense in $\mathcal L(a)$. This follows easily by the same proof of Klingenberg
and Takens for the Riemannian case, that can be carried over to the non lightlike semi-Riemannian case.
This concludes the proof.
\end{proof}

\end{section}

\appendix
\begin{section}{Smooth dependence of closed geodesics on the metric}
\label{sec:appA}
Using the notion of good submanifolds for the free loop space $\Lambda$
we will now give a formal proof of the following result:
\begin{prop}\label{thm:prop1}
Let $M$ be a compact manifold, let $\mathbf g_0$ be a semi-Riemannian metric tensor on $M$ of index $i\in\{0,\ldots,\mathrm{dim}(M)\}$
and of class $C^k$, $k\ge2$, and let $\gamma_0\in\Lambda$ be a nondegenerate closed $\mathbf g_0$-geodesic
in $M$. Then, there exists a neighborhood $\mathcal U_0$ of $\mathbf g_0$ in $\mathrm{Met}(M,i;k)$ and a smooth
function $\gamma:\mathcal U_0\to\Lambda$ such that $\gamma(\mathbf g)$ is a $\mathbf g$-geodesic for all
$\mathbf g\in\mathcal U_0$. Moreover, for $\mathbf g$ in $\mathcal U_0$, $\gamma(\mathbf g)$ is the
unique closed $\mathbf g$-geodesic near $\gamma_0$, and it is nondegenerate.
\end{prop}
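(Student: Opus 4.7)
The plan is to apply the implicit function theorem on a good submanifold through $\gamma_0$, thereby reducing the equivariant problem to a classical finite-dimensional-style IFT argument on a slice.

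First, by Proposition~\ref{thm:exSngeo}, through $\gamma_0$ we can choose a good submanifold $S\subset\Lambda$ for every semi-Riemannian metric, in particular for every $\mathbf g$ in a neighborhood of $\mathbf g_0$. Consider the $C^{k-1}$ map
\[
\Phi:\mathrm{Met}(M,i;k)\times S\longrightarrow T^*S,\qquad
\Phi(\mathbf g,y)=\mathrm df_{\mathbf g}\vert_S(y).
\]
By the first defining property of a good submanifold, $y\in S$ is a zero of $\Phi(\mathbf g,\cdot)$ precisely when $y$ is a closed $\mathbf g$-geodesic. In particular $\Phi(\mathbf g_0,\gamma_0)=0$. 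The partial derivative $\partial_y\Phi(\mathbf g_0,\gamma_0)$ is the Hessian of the restriction $f_{\mathbf g_0}\vert_S$ at $\gamma_0$, which by Proposition~\ref{thm:MorseGMorse} coincides with the restriction of the index form $I_{\mathbf g_0,\gamma_0}$ to $T_{\gamma_0}S\times T_{\gamma_0}S$. Since $T_{\gamma_0}S$ is complementary to $D_{\gamma_0}$ and $\gamma_0$ is nondegenerate by hypothesis, this restriction is a strongly nondegenerate Fredholm symmetric bilinear form, hence an isomorphism $T_{\gamma_0}S\to T_{\gamma_0}^*S$.

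With the isomorphism hypothesis in hand, the Banach-space implicit function theorem yields a neighborhood $\mathcal U_0$ of $\mathbf g_0$ in $\mathrm{Met}(M,i;k)$ and a $C^{k-1}$ map $\gamma:\mathcal U_0\to S$ with $\gamma(\mathbf g_0)=\gamma_0$ and $\Phi(\mathbf g,\gamma(\mathbf g))=0$ for all $\mathbf g\in\mathcal U_0$. The second defining property of a good submanifold then guarantees that $\gamma(\mathbf g)$ is a genuine closed $\mathbf g$-geodesic (not merely a critical point of the restriction), and elliptic regularity for the geodesic equation shows it is of class $C^k$. The uniqueness clause of the IFT gives that $\gamma(\mathbf g)$ is the only zero of $\Phi(\mathbf g,\cdot)$ in a neighborhood of $\gamma_0$ inside $S$. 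To upgrade this to uniqueness in $\Lambda$, one uses that, for $\gamma_0$ of class $C^2$, the $\mathds S^1$-action meets $S$ transversally at $\gamma_0$: by (b) of Lemma~\ref{thm:lem1}, $\mathds S^1\cdot S$ contains a neighborhood of $\gamma_0$ in $\Lambda$, and by shrinking $\mathcal U_0$ any closed $\mathbf g$-geodesic sufficiently close to $\gamma_0$ can be rotated into $S$, hence must coincide with $\gamma(\mathbf g)$.

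Finally, nondegeneracy is preserved by continuity. The index form $I_{\mathbf g,\gamma(\mathbf g)}$ depends continuously on $\mathbf g$, and it is at all times a compact perturbation of a fixed strongly nondegenerate form, so the dimension of its kernel is upper semicontinuous. At $\mathbf g_0$ this kernel is exactly $D_{\gamma_0}$, of dimension one. Since the tangent field $\dot\gamma(\mathbf g)$ always lies in $\mathrm{Ker}\,I_{\mathbf g,\gamma(\mathbf g)}$, by shrinking $\mathcal U_0$ further we get $\mathrm{Ker}\,I_{\mathbf g,\gamma(\mathbf g)}=D_{\gamma(\mathbf g)}$ throughout $\mathcal U_0$, which is exactly the definition of nondegeneracy. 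The step I expect to require the most care is the passage from uniqueness on $S$ to uniqueness in $\Lambda$, since it hinges on ensuring that every nearby closed $\mathbf g$-geodesic can be put on $S$ via the $\mathds S^1$-action, and that the merely continuous action still behaves well near the $C^2$ curve $\gamma_0$; this is where the explicit construction of $S$ in Proposition~\ref{thm:exSngeo} (via exponential chart and $L^2$-orthogonal hyperplane) is essential.
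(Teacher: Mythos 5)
Your proposal takes essentially the same approach as the paper: choose a good submanifold $S$ through $\gamma_0$ via Proposition~\ref{thm:exSngeo}, apply the Implicit Function Theorem to $\frac{\partial f}{\partial\gamma}:\mathrm{Met}(M,i;k)\times S\to TS^*$, and conclude nondegeneracy by continuity. Your write-up is a bit more explicit than the paper's about how uniqueness on the slice $S$ upgrades to uniqueness in $\Lambda$ (via Lemma~\ref{thm:lem1}(b) and rotating nearby geodesics into $S$), but the route and the key ingredients are the same.
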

\begin{proof}
By Proposition~\ref{thm:exSngeo}, there exists a good submanifold $S_0$ of $\Lambda$ through $\gamma_0$;
every metric $\mathbf g\in\mathrm{Met}(M,i;k)$ admits a (nondegenerate) closed geodesic
near $\gamma_0$ if and only if the functional $f_{\mathbf g}\vert_{S_0}$ has a (nondegenerate) critical
point in $S_0$. Consider the smooth map $f:\mathrm{Met}(M,i;k)\times S_0\to\R$ defined by
$f(\mathbf g,\gamma)=f_{\mathbf g}(\gamma)$, and the partial derivative
\[\frac{\partial f}{\partial\gamma}:\mathrm{Met}(M,i;k)\times S_0\longrightarrow TS_0^*.\]
The condition that $\gamma_0$ is a nondegenerate $\mathbf g_0$-geodesic says that
$\frac{\partial f}{\partial\gamma}(\mathbf g_0,\gamma_0)$ belongs to the zero section
$\mathbf 0$ of $TS_0^*$, and that $\frac{\partial f}{\partial\gamma}$ is transversal at $(\mathbf g_0,\gamma_0)$
to $\mathbf 0$. By the implicit function theorem, near $(\mathbf g_0,\gamma_0)$ the inverse image
$\frac{\partial f}{\partial\gamma}^{-1}(\mathbf 0)$ is the graph of a smooth function $\mathbf g\mapsto\gamma(\mathbf g)$.
By continuity, for $\mathbf g$ near $\mathbf g_0$, $\gamma(\mathbf g)$ is nondegenerate.
\end{proof}
Let us show that, around a nondegenerate lightlike closed geodesics, one can find
closed geodesics of arbitrary causal character of nearby metrics.
\begin{prop}\label{thm:prop2}
Let $\gamma_0\in\Lambda$ be a nondegenerate lightlike $\mathbf g_0$-geodesic. Then,
arbitrarily near $\mathbf g_0$ in $\mathrm{Met}(M,i;k)$ one can find metrics $\widetilde{\mathbf g}$
having spacelike and metrics having timelike closed nondegenerate geodesics near $\gamma_0$.
Such metrics $\widetilde{\mathbf g}$ can be found in such a way that the difference $\mathbf g_0-\widetilde{\mathbf g}$
vanishes outside an arbitrarily prescribed open subset $U$ of $M$ containing the image of $\gamma_0$.
\end{prop}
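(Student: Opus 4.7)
The plan is to exhibit the desired perturbed metrics as a one-parameter family $\mathbf g_\epsilon=\mathbf g_0+\epsilon h$, where $h$ is a symmetric $(0,2)$-tensor of class $C^k$ supported in $U$, and to show that the unique nearby closed geodesic of $\mathbf g_\epsilon$ produced by Proposition~\ref{thm:prop1} changes causal character as $\epsilon$ crosses zero. To choose $h$, I would pick a non-negative bump function $\rho\in C^k(M)$ with compact support in $U$ satisfying $\rho\big(\gamma_0(\theta_0)\big)>0$ for some $\theta_0\in\mathds S^1$, fix an auxiliary Riemannian metric $\mathbf g_{\mathrm R}$, and set $h=\rho\,\mathbf g_{\mathrm R}$. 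Since $\gamma_0$ is a non-constant geodesic, $\dot\gamma_0$ vanishes nowhere, so
\[
\int_{\mathds S^1}h(\dot\gamma_0,\dot\gamma_0)\,\mathrm d\theta=\int_{\mathds S^1}\rho\big(\gamma_0(\theta)\big)\,\mathbf g_{\mathrm R}(\dot\gamma_0,\dot\gamma_0)\,\mathrm d\theta>0.
\]

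Because $\mathrm{Met}(M,i;k)$ is open in the Banach space of $C^k$ symmetric $(0,2)$-tensors, for $|\epsilon|$ sufficiently small one has $\mathbf g_\epsilon\in\mathrm{Met}(M,i;k)$. Applying Proposition~\ref{thm:prop1} and possibly shrinking the parameter interval, I would obtain a smooth curve $\epsilon\mapsto\gamma_\epsilon\in\Lambda$ of nondegenerate closed $\mathbf g_\epsilon$-geodesics with $\gamma_0$ as its initial value, each being the unique closed $\mathbf g_\epsilon$-geodesic near $\gamma_0$. Introduce the causal function $c(\epsilon):=\mathbf g_\epsilon\big(\dot\gamma_\epsilon(\theta),\dot\gamma_\epsilon(\theta)\big)$, which is independent of $\theta$ because it is constant along any geodesic; integrating over the parameter circle shows that $c(\epsilon)$ is a positive multiple of $f_{\mathbf g_\epsilon}(\gamma_\epsilon)$. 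Differentiating in $\epsilon$ at $\epsilon=0$ via the chain rule, and noting that the contribution coming from the variation of $\gamma_\epsilon$ is killed by the critical-point identity $\mathrm df_{\mathbf g_0}(\gamma_0)=0$, one obtains
\[
c'(0)\;=\;C\cdot\int_{\mathds S^1}h(\dot\gamma_0,\dot\gamma_0)\,\mathrm d\theta\;>\;0,
\]
for some positive constant $C$ depending only on the parametrization of $\mathds S^1$.

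Since $\gamma_0$ is lightlike one has $c(0)=0$, so $c(\epsilon)>0$ for small $\epsilon>0$ and $c(\epsilon)<0$ for small $\epsilon<0$. Depending on the signature convention associated to the index $i$, one sign produces a nondegenerate spacelike closed $\mathbf g_\epsilon$-geodesic near $\gamma_0$, while the other produces a nondegenerate timelike one. The perturbation $\mathbf g_\epsilon-\mathbf g_0=\epsilon h$ has support contained in $U$ and tends to $0$ in the $C^k$-topology as $\epsilon\to 0$, yielding both families of metrics required by the statement. The only subtle point in this plan is the justification of the derivative computation for $c$: one needs the smoothness of $\epsilon\mapsto\gamma_\epsilon$ supplied by Proposition~\ref{thm:prop1} to apply the chain rule, and the first-order critical condition to discard the internal variation of $\gamma_\epsilon$; once these two ingredients are in place, no further obstacle arises.
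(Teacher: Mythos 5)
Your argument is correct and follows essentially the same route as the paper: both compute the derivative of the geodesic action (equivalently, your causal function $c$) along a one-parameter perturbation of $\mathbf g_0$, use the critical-point identity to discard the inner variation, and conclude from strict positivity of $\int_{\mathds S^1} h(\dot\gamma_0,\dot\gamma_0)\,\mathrm d\theta$ that the sign changes. The only minor stylistic difference is that you build the support condition into $h=\rho\,\mathbf g_{\mathrm R}$ from the outset, whereas the paper first perturbs with an arbitrary positive-definite tensor and then multiplies by a cutoff at the end; your version is slightly more streamlined but not genuinely different.
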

\begin{proof}
Using the terminology above, the statement can be rephrased as follows: the function $\phi:\mathcal U_0\to\R$
defined by
$\phi(\mathbf g)=f\big(\mathbf g,\gamma(\mathbf g)\big)$ must change
sign in arbitrary neighborhoods of $\mathbf g_0$. Observe that $\phi(\mathbf g_0)=0$, so that
if $\phi$ does not change sign in some neighborhood of $\mathbf g_0$, then
$\mathbf g_0$ would be a local extremum
of $\phi$. In this case, it would be $\mathrm d\phi(\mathbf g_0)\mathbf h=0$ for all $\mathbf h$ symmetric $(0,2)$-tensor
of class $C^k$ on $M$. One computes easily:
\[\mathrm d\phi(\mathbf g_0)\mathbf h=\frac{\partial f}{\partial g}(\mathbf g_0,\gamma_0)\mathbf h+\frac{\partial f}{\partial\gamma}(\mathbf g_0,\gamma_0)
\circ\mathrm d\gamma(\mathbf g_0)\mathbf h=\frac{\partial f}{\partial g}(\mathbf g_0,\gamma_0)\mathbf h=
\tfrac12\int_{\mathds S^1}\mathbf h(\dot\gamma_0,\dot\gamma_0)\,\mathrm d\theta.\]
However, the integral on the right hand side in the above equality cannot vanish for all $\mathbf h$;
for instance, if $\mathbf h$ is everywhere positive definite, i.e., a Riemannian metric tensor on $M$, then
such quantity is strictly positive. This shows that arbitrary neighborhoods of $\mathbf g_0$ contain metrics
with timelike and metrics with spacelike closed geodesics near $\gamma_0$. Again, nondegeneracy follows from continuity.

Now, assume that $\widetilde{\mathbf g}$ is one such metric; by continuity,
one can assume that the difference $\mathbf h=\mathbf g_0-\widetilde{\mathbf g}$ is small
enough so that for all $t\in[0,1]$, the sum $\mathbf g_0+t\mathbf h$ is nondegenerate on $M$, i.e.,
a semi-Riemannian metric tensor of index $i$ on $M$.
If $U$ is any open subset of $M$ containing the image of $\gamma_0$, let $\varphi:M\to[0,1]$ be a smooth
function that is identically equal to $1$ near the image of $\gamma_0$ and that vanishes outside $U$.
The metric $\widetilde{\mathbf g}=\mathbf g_0+\varphi\cdot\mathbf h$ satisfies the thesis of the Proposition, and
it coincides with $\mathbf g_0$ outside $U$.
\end{proof}
\begin{cor}\label{thm:cor3}
Let $\gamma_0$ be an arbitrary closed prime $\mathbf g_0$ geodesic in $M$, and let $U$ be an arbitrary
open subset of $M$ containing the image of  $\gamma_0$. Then, arbitrarily near $\mathbf g_0$ in
$\mathrm{Met}(M,i;k)$ one can find a metric $\mathbf g$ with the following properties:
\begin{itemize}
\item[(a)] the difference $\mathbf g-\mathbf g_0$ is a tensor having support in $U$;
\item[(b)] the (unique) closed prime $\mathbf g$-geodesic $\gamma$ near $\gamma_0$ (given by $\gamma(\mathbf g)$, as
in Proposition~\ref{thm:prop1}) is nondegenerate and its two-fold covering $\gamma^{(2)}$ is also nondegenerate.
\end{itemize}
\end{cor}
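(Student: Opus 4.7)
The plan is to combine Proposition~\ref{thm:prop2} (to handle the lightlike case) with a Klingenberg-style local perturbation of the curvature tensor along $\gamma_0$ carried out in Fermi coordinates. The argument decomposes naturally into two independent steps: first ensure $\gamma_0$ is non-lightlike for the perturbed metric, then exploit the availability of Fermi coordinates to move the linearized Poincar\'e map into a position that makes both $\gamma$ and $\gamma^{(2)}$ nondegenerate. At each step the perturbation will be supported in an arbitrarily thin tube around $\gamma_0(\mathds S^1)\subset U$, so property (a) is automatic.

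If $\gamma_0$ is lightlike, I begin by invoking Proposition~\ref{thm:prop2} to replace $\mathbf g_0$ by a metric $\mathbf g_0'$ arbitrarily close to it, coinciding with $\mathbf g_0$ outside $U$, for which the closed geodesic $\gamma_0'=\gamma(\mathbf g_0')$ provided by Proposition~\ref{thm:prop1} is non-lightlike and nondegenerate. For $\mathbf g_0'$ close enough to $\mathbf g_0$, $\gamma_0'$ remains prime, since primeness is an open condition. Thus I may assume from now on that $\gamma_0$ itself is non-lightlike and nondegenerate, so that Fermi coordinates $(t,x_1,\ldots,x_n)$ along $\gamma_0$ are available.

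In these coordinates I apply the classical local perturbation procedure: given any $\mathbf g_0$-symmetric endomorphism $A$ of $\dot\gamma_0^\perp$ near the identity (equivalently, any sufficiently small variation of the linearized Poincar\'e map $\mathfrak P_{\gamma_0}$ as a symplectic map of the symplectic normal bundle), one can produce a $C^k$ symmetric $(0,2)$-tensor $\mathbf h$, of arbitrarily small $C^k$-norm and supported in an arbitrarily thin tube around $\gamma_0(\mathds S^1)\subset U$, such that the metric $\mathbf g=\mathbf g_0+\mathbf h$ keeps $\gamma_0$ as a geodesic (by choosing $\mathbf h$ and its first derivatives to vanish along $\gamma_0$) and realizes the prescribed variation of $\mathfrak P_{\gamma_0}$ via a prescribed variation of the curvature coefficients $R_{ijkl}(t,0)$ entering the Jacobi equation. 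Consequently the map $\mathbf h\mapsto \mathfrak P_{\gamma(\mathbf g_0+\mathbf h)}$ is a smooth submersion onto a neighborhood of $\mathfrak P_{\gamma_0}$ in the symplectic group of $\dot\gamma_0(0)^\perp/\R\dot\gamma_0(0)$.

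Nondegeneracy of $\gamma(\mathbf g)$ is equivalent to $1\notin\sigma(\mathfrak P_{\gamma(\mathbf g)})$, and nondegeneracy of the two-fold cover $\gamma(\mathbf g)^{(2)}$ is equivalent to $1\notin\sigma\bigl(\mathfrak P_{\gamma(\mathbf g)}^2\bigr)$, i.e., to $\{+1,-1\}\cap\sigma(\mathfrak P_{\gamma(\mathbf g)})=\emptyset$. This is an open and dense condition on the symplectic group (the complement is the union of the two proper algebraic hypersurfaces $\det(\mathfrak P - I)=0$ and $\det(\mathfrak P + I)=0$). Combined with the submersivity established in the previous paragraph, this lets me choose $\mathbf h$ arbitrarily small so that both $\gamma(\mathbf g)$ and $\gamma(\mathbf g)^{(2)}$ are nondegenerate.

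The main technical obstacle is the submersivity statement itself: one must exhibit, in the semi-Riemannian setting, enough variations $\mathbf h$ supported near $\gamma_0$ to cover every infinitesimal symplectic direction in $T_{\mathfrak P_{\gamma_0}}\mathrm{Sp}(\dot\gamma_0^\perp/\R\dot\gamma_0)$. In the Riemannian case this is Klingenberg's computation, in which the infinitesimal change of $\mathfrak P_{\gamma_0}$ is written as an explicit integral operator in $R_{ijkl}(t,0)$. Since $\gamma_0$ is non-lightlike, the restriction of $\mathbf g_0$ to $\dot\gamma_0^\perp$ is nondegenerate, and the Jacobi operator along $\gamma_0$ is a symmetric Fredholm operator with respect to this nondegenerate inner product; the formulas and the surjectivity argument of Klingenberg transpose verbatim, with the Riemannian inner product on $\dot\gamma_0^\perp$ replaced by $\mathbf g_0\vert_{\dot\gamma_0^\perp}$. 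Once this is verified, the corollary follows by combining it with the density of the open and dense condition $\{\pm1\}\cap\sigma(\mathfrak P)=\emptyset$.
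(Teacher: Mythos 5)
Your proposal is correct and follows essentially the same route as the paper: reduce to the non-lightlike case via Proposition~\ref{thm:prop2} (keeping the perturbation supported in $U$), then invoke Klingenberg's Fermi-coordinate perturbation of the linearized Poincar\'e map, which transfers to the non-lightlike semi-Riemannian setting because only the nondegeneracy of $\mathbf g_0\vert_{\dot\gamma_0^\perp}$ and symplectic arguments are used. The paper is terser, simply citing Klingenberg's Proposition~3.3.7 and noting the single point where Fermi coordinates require $\gamma_0$ non-lightlike, whereas you spell out the submersivity onto the symplectic group and the open-dense spectral condition $\{\pm1\}\cap\sigma(\mathfrak P)=\emptyset$; that added detail is accurate and does not change the argument.
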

\begin{proof}
In the Riemannian case, the result (in fact, a more general result on the linearized Poincar\'e map
of $\gamma_0$) is proven in \cite{Kli}, see Proposition~3.3.7, page 108, or \cite{KliTak}.
Note that Klingenberg's result ensures that in the perturbed metric $\mathbf g$, $\gamma_0$ remains a geodesic,
i.e., $\gamma(\mathbf g)=\gamma_0$. The proof of \cite[Proposition~3.3.7]{Kli} employs only symplectic arguments,
not using the positive definite character of the metric.
Thus it carries over to the semi-Riemannian case, except for \emph{one point}; namely, in the use of Fermi coordinates
along $\gamma_0$, it is used the fact that at points $\gamma_0(t)$ along $\gamma_0$, the tangent space
$T_{\gamma_0(t)}M$ is spanned by the tangent vector $\dot\gamma_0(t)$ and its orthogonal space
$\dot\gamma(t)^\perp$. In the general semi-Riemannian case, this fails to be true exactly when
$\gamma_0$ is lightlike.  However, under these circumstances, Proposition~\ref{thm:prop2}
says that one can first perturb the metric $\mathbf g_0$ to a new metric $\widetilde{\mathbf g}$ arbitrarily
near $\mathbf g_0$, that coincides with $\mathbf g_0$ outside $U$, and such that
$\gamma=\gamma(\widetilde{\mathbf g})$ has image contained in  $U$ and it is not lightlike.
Now, Klingenberg's perturbation argument can be applied to the triple $(\widetilde{\mathbf g},\gamma,U)$,
yielding a new metric having $\gamma$ and its two-fold covering $\gamma^{(2)}$ as nondegenerate geodesics.
\end{proof}
\end{section}
\begin{section}{Genericity in the $C^\infty$-topology}
\label{sec:appB}
The genericity of semi-Riemannian bumpy metrics can be proved also relatively to the $C^\infty$-topology.
Note that such topology does not descend from a Banach space structure on the set of symmetric
tensors on $M$, and thus the abstract genericity result of Theorem~\ref{thm:abstrgenericitysmooth}
cannot be applied.
The argument is standard, and it is basically contained in \cite{BilJavPic}; we will repeat it here
for the reader's convenience.

Let us define the subsets
\begin{multline*}
\mathrm{Met}^*_N(M,i;k)=\big\{\mathbf g\in \mathrm{Met}(M,i;k):\text{all closed $\mathbf g$-geodesics $\gamma$
with $E(\gamma)\leq N$}\\ \text{are nondegenerate}\big\}
\end{multline*}
and
\begin{multline*}
\mathrm{Met}^*(M,i;k)=\big\{\mathbf g\in \mathrm{Met}(M,i;k):\text{all closed $\mathbf g$-geodesics
are nondegenerate}\big\}.
\end{multline*}
Clearly $\mathrm{Met}^*(M,i;\infty)=\cap_{N=1}^\infty \mathrm{Met}^*_N(M,i;\infty)$, and since
$\mathrm{Met}^*(M,i;\infty)$ is a Baire space, it is enough to prove that every $\mathrm{Met}^*_N(M,i;\infty)$
is open and dense in $\mathrm{Met}(M,i;\infty)$.

To this aim, we first observe that $\mathrm{Met}^*_N(M,i;k)$ is open in
$\mathrm{Met}(M,i;k)$ for $k=2,\ldots,\infty$. This can be seen arguing as in Lemma~\ref{thm:lemit1}.
Let us prove that $\mathrm{Met}^*_N(M,i;\infty)$ is dense in $\mathrm{Met}(M,i;\infty)$.
By Theorem \ref{thm:finalbumpy},  $\mathrm{Met}^*(M,i;k)$ is dense in $\mathrm{Met}(M,i;k)$,
and $\mathrm{Met}^*_N(M,i;k)$ contains $\mathrm{Met}^*(M,i;k)$, thus $\mathrm{Met}^*_N(M,i;k)$ is dense
in $\mathrm{Met}(M,i;k)$ for every $N>0$.
Recall that $\mathrm{Met}(M,i;\infty)$ is dense in $\mathrm{Met}(M,i;k)$. Then,
$\mathrm{Met}(M,i;\infty)\cap \mathrm{Met}_N^*(M,i;k)=\mathrm{Met}^*_N(M,i;\infty)$ is dense in
$\mathrm{Met}(M,i;k)$ for all $k\geq 2$ because it is the intersection of a dense subset with a
dense open subset. This implies that $\mathrm{Met}^*_N(M,i;\infty)$ is dense in $\mathrm{Met}(M,i;\infty)$
and the proof is concluded.
\end{section}


\begin{thebibliography}{99}

\bibitem{Abr}
\textsc{R.~Abraham}, \emph{Bumpy metrics}, in Global Analysis (Proc.
Sympos. Pure
  Math., Vol. XIV, Berkeley, Calif., 1968), Amer. Math. Soc., Providence, R.I.,
  1970, pp.~1--3.

\bibitem{Ano} \textsc{D. V. Anosov}, \emph{Generic properties
of closed geodesics},  Izv.\ Akad.\ Nauk SSSR Ser.\ Mat.\
\textbf{46} (1982), no.\ 4, 675--709, 896.

\bibitem{AbrRob} \textsc{R. Abraham, J. Robbin}, \emph{Transversal mappings and flows},
W. A. Benjamin, New York, 1967.

\bibitem{BeErEa96}
\textsc{J.~K. Beem, P.~E. Ehrlich, and K.~L. Easley}, {\em Global
{L}orentzian
  geometry}, vol.~202 of Monographs and Textbooks in Pure and Applied
  Mathematics, Marcel Dekker Inc., New York, second~ed., 1996.

\bibitem{BilJavPic} \textsc{L. Biliotti, M. A. Javaloyes, P. Piccione},
\emph{Genericity of nondegenerate critical points and Morse geodesic functionals}, preprint 2008, to appear
in Indiana University Math.\ Journal.

\bibitem{Chi80} \textsc{D. Chillingworth}, \emph{A global genericity theorem for bifurcations in variational
problems}, J. Funct.\ Anal.\ \textbf{35} (1980), 251--278.

\bibitem{ContPat} \textsc{G. Contreras-Barandiar\'an, G. Paternain}, \emph{Genericity of geodesic flows with
positive topological entropy on $S^2$}, J. Diff.\ Geom.\ \textbf{61} (2002), 1--49.

\bibitem{GiaGiaPicCMP} \textsc{R. Giamb\`o, F. Giannoni, P. Piccione}, \emph{ Genericity of Nondegeneracy for Light Rays in Stationary Spacetimes},
Comm.\ Math.\ Phys.\ \textbf{287} (2009), Number 3, 903--923.

\bibitem{GonMir} \textsc{J. A. Gon\c calves Miranda}, \emph{Generic properties for magnetic flows on surfaces}, Nonlinearity \textbf{19}
(2006), 1849--1874.

\bibitem{GreHalVan} \textsc{W. Greub, S. Halperin, R. Vanstone},
\emph{Connections, curvature, and cohomology} Vol. I, II, III:
Pure and Applied Mathematics, Vol. 47, 47-II, 47-III. Academic Press, New York-London, 1972, 1973, 1976.

\bibitem{Kli}
\textsc{W.~Klingenberg}, {\em Lectures on closed geodesics},
Springer-Verlag,
  Berlin, 1978.
\newblock Grundlehren der Mathematischen Wissenschaften, Vol. 230.

\bibitem{KliTak}
\textsc{W.~Klingenberg and F.~Takens}, {\em Generic properties of
geodesic flows},
  Math.\ Ann.\ \textbf{197} (1972), pp.~323--334.

\bibitem{MeyPal} \textsc{K. R. Meyer, J.  Palmore},
\emph{A generic phenomenon in conservative Hamiltonian systems},
1970 Global Analysis (Proc.\ Sympos.\ Pure Math., Vol.\ XIV,
Berkeley, Calif., 1968) pp.\ 185--189.

\bibitem{One83}
\textsc{B.~O'Neill}, {\em Semi-{R}iemannian geometry}, vol.~103 of
Pure and
  Applied Mathematics, Academic Press Inc. [Harcourt Brace Jovanovich
  Publishers], New York, 1983.
\newblock With applications to relativity.

  \bibitem{Sma} \textsc{S. Smale}, \emph{An infinite dimensional version of Sard's theorem},
  Amer.\ J.\ Math.\ \textbf{87} (1965), 861--866.

\bibitem{Whi}
\textsc{B.~White}, {\em The space of minimal submanifolds for
varying {R}iemannian
  metrics}, Indiana Univ.\ Math.\ J.\ \textbf{40} (1991), pp.~161--200.

\end{thebibliography}
\end{document}